\setlist[enumerate]{nosep}
\definecolor{lightgray}{gray}{0.9}
\definecolor{labelkey}{rgb}{0,0.08,0.45}
\definecolor{refkey}{rgb}{0,0.6,0.0}
\definecolor{Brown}{rgb}{0.45,0.0,0.05}
\definecolor{lime}{rgb}{0.00,0.8,0.0}
\definecolor{lblue}{rgb}{0.5,0.5,0.99}
\colorlet{hlcyan}{cyan!30}
\def\namedlabel#1#2{\begingroup
   \def\@currentlabel{#2}%
   \label{#1}\endgroup
}
\newcommand{\sepp}{\setlength{\itemsep}{-2pt}}
\newcommand{\tu}{\ensuremath{\tilde{u}}}
\newcommand{\tv}{\ensuremath{\tilde{v}}}
\newcommand{\nnn}{\ensuremath{{n\in{\mathbb N}}}}
\newcommand{\thalb}{\ensuremath{\tfrac{1}{2}}}
\newcommand{\menge}[2]{\big\{{#1}~\big |~{#2}\big\}}
\newcommand{\fenv}[1]%
{\ensuremath{\,\overrightarrow{\operatorname{env}}_{#1}}}
\newcommand{\benv}[1]%
{\ensuremath{\,\overleftarrow{\operatorname{env}}_{#1}}}
\newcommand{\scal}[2]{\left\langle{#1},{#2}  \right\rangle}
\newcommand{\RR}{\ensuremath{\mathbb R}}
\newcommand{\Xt}{\ensuremath{{X}}}
\newcommand{\Ct}{\ensuremath{\tilde{C}}}
\newcommand{\Id}{\ensuremath{\operatorname{Id}}}
\newcommand{\proj}[1]{{\thinspace P\thinspace}_%
{\negthinspace\negthinspace #1}}
\newcommand{\minimize}[2]{\ensuremath{\underset{\substack{{#1}}}{\mathrm{minimize}}\;\;#2 }}
\DeclarePairedDelimiterX\set[2]{ \{ }{ \}_{#2} }{#1}
\DeclarePairedDelimiterX\rb[1]{ ( }{ ) }{#1}
\crefname{equation}{}{equations}
\crefname{chapter}{Appendix}{chapters}
\crefname{item}{}{items}
\crefname{enumi}{}{}
\theoremstyle{definition}
\newtheorem{theorem}{Theorem}[section]
\newtheorem{lemma}[theorem]{Lemma}
\newtheorem{proposition}[theorem]{Proposition}
\newtheorem{definition}[theorem]{Definition}
\newtheorem{fact}[theorem]{Fact}
\newtheorem{remark}[theorem]{Remark}
\providecommand{\norm}[1]{\lVert#1\rVert}
\providecommand{\lam}{\lambda}
\providecommand{\RR}{\mathbb{R}}
\providecommand{\proj}{\operatorname{Proj}}
\providecommand{\Id}{\operatorname{{ Id}}}
\providecommand{\Id}{\operatorname{Id}}
\providecommand{\RR}{\mathbb{R}}
\definecolor{myblue}{rgb}{.8, .8, 1}
  \newcommand*\mybluebox[1]{%
    \colorbox{myblue}{\hspace{1em}#1\hspace{1em}}}
\tikzset{style green/.style={
    set fill color=green!50!lime!60,
    set border color=white,
  },
  style cyan/.style={
    set fill color=cyan!90!blue!60,
    set border color=white,
  },
  style gray/.style={
    set fill
    color=black!10!,
    set border color=white,
  },
  style orange/.style={
    set fill color=orange!80!red!60,
    set border color=white,
  },
  hor/.style={
    above left offset={-0.15,0.31},
    below right offset={0.15,-0.125},
    #1
  },
  ver/.style={
    above left offset={-0.1,0.3},
    below right offset={0.15,-0.15},
    #1
  }
}
\begin{document}

\title{\textsc{
Projections onto hyperbolas or bilinear constraint sets in Hilbert spaces
}}

\author{
Heinz H.\ Bauschke\thanks{
Mathematics, University
of British Columbia,
Kelowna, B.C. V1V~1V7, Canada. E-mail:
\texttt{heinz.bauschke@ubc.ca}.},~
Manish Krishan Lal\thanks{
Mathematics, University
of British Columbia,
Kelowna, B.C. V1V~1V7, Canada. E-mail:
\texttt{manish.krishanlal@ubc.ca}.},~
and
Xianfu Wang\thanks{
Mathematics, University
of British Columbia,
Kelowna, B.C. V1V~1V7, Canada. E-mail:
\texttt{shawn.wang@ubc.ca}.}
}
\date{December 1, 2021} 
\maketitle

\vskip 8mm

\begin{abstract} \noindent
Sets of bilinear constraints are important in various machine learning models. Mathematically, they are
hyperbolas in a product space.
In this paper, we give a complete formula for projections onto sets of bilinear constraints or
hyperbolas in a general Hilbert space.
\end{abstract}

{
\noindent
{\bfseries 2020 Mathematics Subject Classification:}
{Primary 41A50, 90C26; Secondary 90C20, 46C05.
}

\noindent {\bfseries Keywords:} bilinear constraint set,
hyperbola, orthogonal projection, nonconvex minimization, prox-regularity.
}

\section{Introduction}
Throughout this paper, we assume that
\begin{empheq}[box=\mybluebox]{equation}
    \text{$X$ is a Hilbert space
    with inner product
    $\scal{\cdot}{\cdot}\colon X\times X\to\RR$, }
\end{empheq}
and induced norm $\|\cdot\|$. In various learning models \cite{elser2019learning,elser2017products}, Elser utilizes
projections onto the \emph{bilinear constraint} set:
\begin{empheq}[box=\mybluebox]{equation}
\label{e:defC}
C_\gamma := \menge{(x,y)\in X\times
X}{\scal{x}{y}=\gamma}
\end{empheq}
where $\gamma\in\RR$ is a fixed constant. Mathematically speaking, up to a rotation, this is just a hyperbola or
quadratic surface in $X\times X$. Finding projections onto quadratic curves or surfaces algorithmically
have many practical applications;
see, e.g., \cite{quadrics, conics, elser2019learning, elser2017products}.
If $\gamma=0$ in $C_{\gamma}$, then $C_0$ becomes the set `cross' in $X\times X$.
The projection formula for the `cross' has been thoroughly investigated \cite{cross}.
In \cite{elser2019learning,elser2017products}, although Elser has provided some
results on projections
onto $C_{\gamma}$ only when $X=\RR^n$, complete mathematical details are not presented.

\emph{The goal of this paper is to give a complete analysis of the projection onto the set $C_{\gamma}$ with $\gamma\neq 0$,
and extend the results to a general Hilbert space.}


\noindent The remainder of the paper is organized as follows. \cref{sec:aux} gives
some general properties of bilinear constraint sets. In \cref{section:hyperbola} we
focus on full mathematical details
for the existence and explicit formula of projections onto hyperbolas. Using results
from \cref{section:hyperbola}, in
Sections~\ref{sec:positive:gam}--\ref{sec:negative:gam} we provide explicit formulas for
projections onto sets of bilinear constraints.
Our notation is standard and follows largely \cite{BC2017,Rock98}.
\section{General properties of $C_{\gamma}$}
\label{sec:aux}
In an infinite-dimensional space, the existence of projection onto a set
often requires the weak closedness of the set.
Our first result says that
although the set $C_{\gamma}$ is norm closed it is not weakly closed in $X\times X$.
\begin{proposition}\label{p:weak}
The set $C_{\gamma}$ is closed in the
norm
topology but
not closed in the weak topology in $X\times X$.
In fact, $\overline{C_{\gamma}}^{\rm weak}=X\times X$.
\end{proposition}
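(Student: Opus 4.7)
The norm-closedness is immediate: the bilinear pairing $(x,y)\mapsto\scal{x}{y}$ is jointly norm-continuous on $X\times X$, and $C_{\gamma}$ is the preimage of the singleton $\{\gamma\}$ under this map. So the substantive content is the density statement $\overline{C_{\gamma}}^{\,\mathrm{weak}}=X\times X$, which is only meaningful when $\dim X=\infty$ (in finite dimensions the weak and norm topologies coincide and both claims together would be contradictory); I will assume this throughout, as is standard in such statements.

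To prove weak density, I will fix an arbitrary point $(x_0,y_0)\in X\times X$ and exhibit a sequence in $C_{\gamma}$ converging weakly to it. Set $c:=\gamma-\scal{x_0}{y_0}$. If $c=0$ then $(x_0,y_0)\in C_{\gamma}$ and there is nothing to prove. Otherwise, the subspace $\mathrm{span}\{x_0,y_0\}$ has dimension at most two, so its orthogonal complement is infinite-dimensional and contains an orthonormal sequence $(e_n)_{n\in\NN}$. Since $e_n\weakly 0$ in $X$ (Bessel's inequality) and the pair $(x_0,y_0)$ is fixed, the sequence
\[
(u_n,v_n):=(x_0+e_n,\;y_0+ce_n)
\]
converges weakly to $(x_0,y_0)$ in $X\times X$. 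The key computation is
\[
\scal{u_n}{v_n}=\scal{x_0}{y_0}+c\,\scal{x_0}{e_n}+\scal{e_n}{y_0}+c\,\scal{e_n}{e_n}=\scal{x_0}{y_0}+c=\gamma,
\]
where the middle two terms vanish because $e_n\perp x_0$ and $e_n\perp y_0$, and $\|e_n\|=1$. Hence each $(u_n,v_n)\in C_{\gamma}$, establishing weak density and, a fortiori, the failure of weak closedness.

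The main (mild) obstacle is simply making a choice of scalars $\alpha,\beta$ with $\alpha\beta=c$ that keeps both $\alpha e_n$ and $\beta e_n$ weakly null; taking $\alpha,\beta$ independent of $n$ (as above) does the job, since boundedness together with $e_n\weakly 0$ yields $\alpha e_n\weakly 0$ and $\beta e_n\weakly 0$. No deeper machinery is needed beyond the existence of an orthonormal sequence in $\{x_0,y_0\}^{\perp}$, which is where the infinite-dimensional hypothesis enters.
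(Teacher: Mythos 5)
Your proof is correct and follows essentially the same route as the paper: both arguments pick an orthonormal (hence weakly null) sequence $(e_n)$ in $\{x_0,y_0\}^{\perp}$ and perturb $(x_0,y_0)$ by scalar multiples of $e_n$ chosen so that the cross terms vanish and the $\|e_n\|^2=1$ term supplies exactly the deficit $\gamma-\scal{x_0}{y_0}$. The only differences are cosmetic (signs and which component carries the scalar), plus your explicit remark that the statement presupposes $\dim X=\infty$, which the paper's proof also uses but does not flag in the proposition itself.
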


\begin{proof} Evidently, $C_{\gamma}$ is norm closed. We show that
$\overline{C_{\gamma}}^{\rm weak}=X\times X$.
Let $(x,y)\in X\times X$. Consider $S=\mbox{span}\{x,y\}$.
The orthogonal decomposition theorem gives $X= S\oplus S^{\perp}$.
Because $X$ is infinite-dimensional and $S$ is at most two-dimensional, $S^{\perp}$ is infinite dimensional, so
any orthonormal base of $S^{\perp}$ must have a sequence $(e_{n})_{\nnn}$ which converges weakly to $0$, i.e.,
$e_{n}\rightharpoonup 0$; see, e.g., \cite{kreyszig}.
Also $e_{n}\perp x$ and
$e_{n}\perp y.$
Set $\xi =\gamma-\langle x, y\rangle$. Then
\begin{align}
\scal{x-\xi e_{n}}{y-e_{n}} &=
\scal{x}{y}-\scal{x}{e_{n}}-\xi\scal{e_{n}}{y}+\xi\scal{e_{n}}{e_{n}}\\
&=\scal{x}{y}+\xi  =\gamma,
\end{align}
so $(x-\xi e_{n}, y-e_{n})\in C_{\gamma}$.
Since $(x-\xi e_{n}, y-e_{n}) \rightharpoonup (x,y)$, we have
 $(x, y)\in \overline{C_{\gamma}}^{\rm weak}$.
Because $(x,y)\in X\times X$ was arbitrary, we conclude that
$X\times X\subseteq \overline{C}^{\rm weak}$.
\end{proof}

Proposition~\ref{p:weak} indicates that finding $P_{C_{\gamma}}$, i.e., projections onto $C_{\gamma}$,
might be complicated in a general Hilbert space.
This seemingly difficult issue can be completely avoided by utilizing the structures of
the optimization problem. See Subsection~\ref{s:existence} below.

Our next result says that when $\gamma\neq 0$, locally around
the set $C_{\gamma}$ the projection onto the set is always single-valued.
\begin{proposition}\label{p:regular}
 Let $\gamma\neq 0$, and $C_{\gamma}
= \menge{(x,y)\in X\times X}{\scal{x}{y}=\gamma}.$
For every $(x,y)\in C_{\gamma}$,
the the following hold:
\begin{enumerate}
\item\label{i:prox1}
$C_{\gamma}$ is prox-regular at $(x,y)$.
\item\label{i:prox2}
There exists a neighborhood of $(x,y)$
on which
the projection mapping onto $C_{\gamma}$ is monotone and Lipschitz continuous.
\end{enumerate}
\end{proposition}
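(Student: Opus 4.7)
The plan is to realize $C_\gamma$ as a smooth hypersurface cut out by a nondegenerate $C^\infty$ constraint, and then invoke the standard prox-regularity theory for $C^2$ submanifolds in Hilbert space.

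Define $g\colon X\times X\to\RR$ by $g(x,y)=\scal{x}{y}-\gamma$, so that $C_\gamma=g^{-1}(0)$. Then $g$ is $C^\infty$ with Fr\'{e}chet derivative $\nabla g(x,y)=(y,x)$ and bounded constant Hessian $((u_1,v_1),(u_2,v_2))\mapsto\scal{u_1}{v_2}+\scal{u_2}{v_1}$. Because $\gamma\neq 0$, every $(x,y)\in C_\gamma$ differs from $(0,0)$, so $\nabla g(x,y)\neq 0$ at each point of $C_\gamma$. Hence $C_\gamma$ is a $C^\infty$ Hilbert submanifold of codimension one.

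To establish \cref{i:prox1}, I would apply the implicit function theorem at a given $(x,y)\in C_\gamma$ to represent $C_\gamma$ locally as the graph of a $C^\infty$ map from a neighborhood of $0$ in the tangent hyperplane $(y,x)^\perp$ into its one-dimensional orthogonal complement. A second-order Taylor expansion of this graph around $0$ then produces constants $r>0$ and a neighborhood $V$ of $(x,y)$ such that for every $(x',y'),(x'',y'')\in C_\gamma\cap V$ and every proximal normal vector $\nu$ to $C_\gamma$ at $(x',y')$,
\[
\scal{\nu}{(x'',y'')-(x',y')}\leq \tfrac{1}{2r}\,\norm{\nu}\,\norm{(x'',y'')-(x',y')}^2.
\]
This is precisely prox-regularity in the sense of Poliquin--Rockafellar--Thibault, and the characterization carries over to the Hilbert setting via the extensions of Bernard--Thibault.

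For \cref{i:prox2}, I would then invoke the standard consequence of prox-regularity: on a sufficiently thin tube $\{z\in X\times X : d_{C_\gamma}(z)<\delta\}$ with $\delta$ small compared to $r$, the projection $P_{C_\gamma}$ is single-valued and Lipschitz continuous, and local monotonicity follows from the cocoercivity-type inequality inherited from the $C^{1,1}$-smoothness of $\tfrac{1}{2}d_{C_\gamma}^2$ on the tube. The principal obstacle is choosing the prox-regularity radius $r$ uniformly on a neighborhood of $(x,y)$, but because $\nabla g$ is bounded away from zero on a small ball around $(x,y)$ and $\nabla^2 g$ is a fixed bounded bilinear form, the second derivative of the implicit graph is uniformly controlled nearby, so $r$ can indeed be chosen uniformly and the same neighborhood then works for both claims.
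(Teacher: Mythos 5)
Your argument is correct in outline but takes a genuinely different route from the paper. Both proofs pivot on the same nondegeneracy observation --- $\nabla h(x,y)=(y,x)\neq(0,0)$ on $C_\gamma$ precisely because $\gamma\neq 0$ --- but the paper then finishes in two lines by writing $\iota_{C_\gamma}=\iota_{\{\gamma\}}\circ h$ and citing Bernard--Thibault's composition theorem (a convex function composed with a $C^2$ mapping satisfying a qualification condition is prox-regular), and for \ref{i:prox2} it cites their companion result on local single-valuedness, Lipschitz continuity, and monotonicity of the projection onto a prox-regular set. You instead propose to verify the proximal-normal inequality directly: represent $C_\gamma$ locally as a $C^\infty$ graph over the tangent hyperplane via the implicit function theorem and control the second-order term uniformly, which works because $\nabla g$ is bounded away from zero near $(x,y)$ and $\Hess g$ is a fixed bounded bilinear form. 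What your approach buys is self-containedness and an explicit prox-regularity radius; what it costs is that the Taylor-expansion step and the passage from prox-regularity to the properties in \ref{i:prox2} are only sketched, and for the monotonicity claim the cleanest justification is not the $C^{1,1}$-smoothness of $\tfrac12 d_{C_\gamma}^2$ per se but the fact that on the tube $P_{C_\gamma}$ coincides with the gradient of the (convex) Asplund function $\tfrac12\norm{\cdot}^2-\tfrac12 d_{C_\gamma}^2$; if you flesh out those two points, your proof stands on its own without the external citations the paper relies on.
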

\begin{proof}
\ref{i:prox1}: Write
$\iota_{C_{\gamma}}(x,y)=\iota_{\{\gamma\}}(h(x,y))$ where $h(x,y)=\scal{x}{y}$. Let
$(x,y)\in C_{\gamma}$. Then
$\nabla h(x,y)=(y,x) \neq (0,0)$ because of $\gamma\neq 0$.
 Being a composition of a convex function $\iota_{\{\gamma\}}$
and a twice
differentiable function $h$ that is qualified at $(x,y)$, \cite[Proposition 2.4]{thibault} shows that
$\iota_{C_{\gamma}}$ is prox-regular at $(x,y)$, so is $C_{\gamma}$.

\ref{i:prox2}: Apply \ref{i:prox1} and \cite[Proposition 4.4]{thibault}.
\end{proof}

Observe that  Proposition~\ref{p:regular}
also follow from \cite[Proposition 13.32]{Rock98} and \cite[Exercise 13.38]{Rock98} when $X=\RR^n$; and that
when $\gamma=0$, $C_{0}$ is not prox-regular at $(0,0)$. Although the projection
exists locally around $C_{\gamma}$, it is still not clear for the global existence.

\section{Projections onto hyperbolas}
\label{section:hyperbola}
For ease of analysis, we start with
\begin{empheq}{equation*}
C_{\gamma} := \menge{(x,y)\in X\times X}{h(x,y):=\scal{x}{y}-\gamma=0}
\end{empheq}
where $\gamma>0$.
Our goal is to find the projection formula
$P_{C_{\gamma}}(x_0,y_{0})$ for every $(x_{0},y_{0})\in X\times X$. That is,
\begin{empheq}{equation*}
\text{minimize} \quad f(x,y):=\|x-x_{0}\|^2+\|y-y_{0}\|^2
\quad\text{subject to} \quad (x,y) \in C_{\gamma}.\tag{P}
\end{empheq}

\subsection{Auxiliary problems and existence of projections}
\label{s:existence}
\noindent To determine the projection operator $\proj{C_{\gamma}}$ of the set $C_{\gamma}$,
we shall introduce two
equivalently
reformulated problems.
First, for every $(u_{0},v_{0})\in X\times X$, we solve the problem
\begin{empheq}{equation*}
\text{minimize} \quad f_{1}(u,v):=\|u-u_{0}\|^2+\|v-v_{0}\|^2
\quad\text{subject to} \quad (u,v) \in \Ct_1, \tag{$\tilde{P}_{1}$}
\end{empheq}
where  $\Ct_1:= \menge{(u,v)\in \Xt \times \Xt}{h_{1}(u,v):=\norm{u}^2 -\norm{v}^2-2=0}.$
Next, for every $(\tu_{0},\tv_{0})\in X\times X$ we solve the problem
\begin{empheq}{equation*}
\text{minimize} \quad f_{\gamma}(\tilde{u},\tilde{v}):=\|\tilde{u}-\tilde{u}_{0}\|^2+\|\tilde{v}-
\tilde{v}_{0}\|^2
\quad\text{subject to} \quad (\tilde{u},\tilde{v}) \in \Ct_{\gamma}, \tag{$\tilde{P}_{\gamma}$}
\end{empheq}
where $\Ct_{\gamma}:= \menge{(\tilde{u},\tilde{v})\in \Xt \times \Xt}{h_{\gamma}(
\tilde{u},\tilde{v}):=\norm{\tilde{u}}^2 -\norm{\tilde{v}}^2-2\gamma=0}$.
Both $\Ct_{1}$ and $\Ct_{\gamma}$ are hyperbolas.
$\tilde{P}_{1}$ and $\tilde{P}_{\gamma}$ solve for projections $P_{\Ct_{1}}$ and $P_{\Ct_{\gamma}}$ respectively,
and their connections to
$P_{C_{\gamma}}$ are given by the \cref{p:reformulate} below. Recall
\begin{definition}[Rotation with an angle $\phi$]
\label{f:rotation}
A change of coordinates $(u,v) \in \Xt \times \Xt$, by rotation through an angle $\phi$, is defined by
$$
\begin{bmatrix}x\\y\end{bmatrix}=\begin{bmatrix}\cos\phi\Id & -\sin\phi\Id \\
\addlinespace
\sin\phi\Id & \cos\phi\Id \end{bmatrix}
\begin{bmatrix}u\\v\end{bmatrix}.$$
\end{definition}
Put
$$A_{\phi}:=\begin{bmatrix}\cos\phi\Id & -\sin\phi\Id \\
\addlinespace
\sin\phi\Id & \cos\phi\Id \end{bmatrix}.$$
Then $A_{\phi}A_{-\phi}=\Id=A_{-\phi}A_{\phi}$ and $A^{-1}_{\phi}=A_{-\phi}$.
The relationships among $P_{C_{\gamma}}, P_{\Ct_{\gamma}}$ and $P_{\Ct_{1}}$ are summarized below.
\begin{proposition}\label{p:reformulate}
The following hold:
\begin{enumerate}
\item\label{i:scaling} $P_{\Ct_{\gamma}}=\sqrt{\gamma}P_{\Ct_{1}}(\Id/\sqrt{\gamma}).$
\item\label{i:rotate} $P_{C_{\gamma}}=A_{\pi/4}P_{\Ct_{\gamma}}A_{-\pi/4}.$
\end{enumerate}
\end{proposition}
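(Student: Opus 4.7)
\emph{Strategy.} Both identities reduce to the observation that projections behave well under rigid transformations. For (i), the transformation is the scaling $z\mapsto z/\sqrt{\gamma}$, which carries $\tilde{C}_\gamma$ onto $\tilde{C}_1$. For (ii), the transformation is the orthogonal change of coordinates $A_{\pi/4}$, which carries $\tilde{C}_\gamma$ onto $C_\gamma$. I will isolate each geometric identification and then invoke a one-line minimization argument.

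\emph{Step 1 (scaling, part (i)).} I would first check directly from the defining equations that $\tilde{C}_\gamma=\sqrt{\gamma}\,\tilde{C}_1$: $(u,v)\in\tilde{C}_\gamma$ iff $\|u\|^2-\|v\|^2=2\gamma$ iff $\|u/\sqrt{\gamma}\|^2-\|v/\sqrt{\gamma}\|^2=2$ iff $(u,v)/\sqrt{\gamma}\in\tilde{C}_1$. Then for any $(u_0,v_0)\in X\times X$, substituting $(u,v)=\sqrt{\gamma}(u',v')$ into the squared-distance objective $f_\gamma$ gives $f_\gamma(u,v)=\gamma\bigl(\|u'-u_0/\sqrt{\gamma}\|^2+\|v'-v_0/\sqrt{\gamma}\|^2\bigr)$, and the constraint becomes $(u',v')\in\tilde{C}_1$. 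Hence the minimizers correspond under $(u,v)=\sqrt{\gamma}(u',v')$, yielding $P_{\tilde{C}_\gamma}(u_0,v_0)=\sqrt{\gamma}\,P_{\tilde{C}_1}\bigl((u_0,v_0)/\sqrt{\gamma}\bigr)$.

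\emph{Step 2 (rotation, part (ii)).} I would verify that $A_{\pi/4}$ is a linear isometry of $X\times X$ endowed with the product inner product: a direct expansion gives
\begin{equation*}
\bigl\|A_{\pi/4}(u,v)\bigr\|^2=\tfrac{1}{2}\|u-v\|^2+\tfrac{1}{2}\|u+v\|^2=\|u\|^2+\|v\|^2,
\end{equation*}
so $A_{\pi/4}$ preserves distances; equivalently $A_{\pi/4}^{-1}=A_{-\pi/4}$ coincides with its adjoint. Next I would identify $C_\gamma$ as the image $A_{\pi/4}(\tilde{C}_\gamma)$: writing $(x,y)=A_{\pi/4}(u,v)=\bigl(\tfrac{u-v}{\sqrt 2},\tfrac{u+v}{\sqrt 2}\bigr)$ yields
\begin{equation*}
\scal{x}{y}=\tfrac{1}{2}\bigl(\|u\|^2-\|v\|^2\bigr),
\end{equation*}
so $\scal{x}{y}=\gamma$ iff $\|u\|^2-\|v\|^2=2\gamma$. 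Finally, since $A_{\pi/4}$ is an isometry, for any $(x_0,y_0)$ the problem of minimizing $\|(x,y)-(x_0,y_0)\|$ over $(x,y)\in C_\gamma$ transforms, via $(x,y)=A_{\pi/4}(u,v)$, into minimizing $\|(u,v)-A_{-\pi/4}(x_0,y_0)\|$ over $(u,v)\in\tilde{C}_\gamma$. This produces $P_{C_\gamma}=A_{\pi/4}\,P_{\tilde{C}_\gamma}\,A_{-\pi/4}$.

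\emph{Expected obstacle.} There is no deep obstacle; both claims are instances of the general principle that projection commutes with similitudes. The only point requiring care is the bookkeeping: keeping the scalar $\sqrt{\gamma}$ factored correctly out of the objective in (i), and verifying that $A_{\pi/4}$ is an isometry with respect to the \emph{product} Hilbert space norm (not just its component-wise $2\times 2$ rotation structure) in (ii). Once these are in hand, the projection identities follow by a one-line change of variables in the minimization problem.
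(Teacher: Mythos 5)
Your proposal is correct and follows essentially the same route as the paper: both parts are proved by the same change of variables in the minimization problem (scaling by $1/\sqrt{\gamma}$ for (i), the rotation $A_{\pi/4}$ for (ii)), with the objective picking up a harmless factor $\gamma$ in the first case and being preserved by the isometry in the second. Your explicit verification that $A_{\pi/4}$ is an isometry of the product space and that $\scal{x}{y}=\tfrac{1}{2}(\|u\|^2-\|v\|^2)$ simply makes explicit what the paper's computation with $A_{\pi/4}^{T}A_{\pi/4}=\Id$ leaves implicit.
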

\begin{proof}
\ref{i:scaling}: $(\tilde{P}_{1})$ is equivalent to $(\tilde{P}_{\gamma})$
by a change of variables of scaling
$\Id/\sqrt{\gamma}$. Indeed, using
$$
\begin{bmatrix}
u\\
v
\end{bmatrix}
=
\frac{1}{\sqrt{\gamma}}
\begin{bmatrix}
\tilde{u}\\
\tilde{v}
\end{bmatrix}, \text{ and } \begin{bmatrix}
u_{0}\\
v_{0}
\end{bmatrix}
=
\frac{1}{\sqrt{\gamma}}
\begin{bmatrix}
\tu_{0}\\
\tv_{0}
\end{bmatrix}
$$
$h_{\gamma}(\tilde{u},\tilde{v})=0$ becomes $h_{1}(u,v)=0$, and
$f_{\gamma}(\tilde{u},\tilde{v})$ becomes
$\gamma f_{1}(u,v)=\gamma(\|u-u_{0}\|^2+\|v-v_0\|^2)$.

\ref{i:rotate}: $(P)$ is equivalent to $(\tilde{P}_{\gamma})$ by a change of variables of
rational $A_{\pi/4}$. Indeed, with
$$\begin{bmatrix}
x\\
y
\end{bmatrix}
  =A_{\pi/4}\begin{bmatrix}
  \tilde{u}\\
  \tilde{v}
  \end{bmatrix}, \text{ and }
  \begin{bmatrix}
x_{0}\\
y_{0}
\end{bmatrix}
  =A_{\pi/4}\begin{bmatrix}
  \tilde{u}_{0}\\
  \tilde{v}_{0}
  \end{bmatrix}
  $$
 the objective $f(x,y) = \|x-x_0\|^2 + \|y-y_0\|^2=\|(x,y)-(x_{0},y_0)\|^2$ can be rewritten as
 \begin{align}
   f(x,y) &=
\norm{A_{\frac{\pi}{4}}\begin{bmatrix}
           \tilde{u}-\tilde{u}_0 \\
          \tilde{v}-\tilde{v}_0 \end{bmatrix}
     }^2 \nonumber =
     \begin{bmatrix}
           \tilde{u}-\tilde{u}_{0} \\
          \tilde{v}-\tilde{v}_{0} \end{bmatrix}^{T}A_{\frac{\pi}{4}}^{T}A_{\frac{\pi}{4}}\begin{bmatrix}
           \tilde{u}-\tilde{u}_{0} \\
          \tilde{v}-\tilde{v}_{0} \end{bmatrix} \\
& 
=\norm{\tilde{u}-\tilde{u}_0}^2 + \norm{\tilde{v}-\tilde{v}_0}^2, \nonumber\label{new_obj}
  \end{align}
  and
  $h(x,y)=\scal{x}{y}-\gamma=0$ becomes $h_{\gamma}(\tilde{u},\tilde{v})=
  \|\tilde{u}\|^2-\|\tilde{v}\|^2-2\gamma=0$.
\end{proof}

In view of Proposition~\ref{p:reformulate}\ref{i:rotate}, to see
that $P_{C_{\gamma}}(x,y)\neq \varnothing$ for every $(x,y)\in X\times X$,
the following observation is crucial.

\begin{proposition}\label{p:two:dim}
For every $(\tu_0,\tv_0) \in \Xt\times\Xt$,
the minimization problem
\begin{eqnarray}
\text{minimize} & \quad f_{\gamma}(\tu,\tv) =\|\tilde{u}-\tilde{u}_{0}\|^2+\|\tilde{v}-
\tilde{v}_{0}\|^2 \label{e:simple1}\\
\text{subject to} & \quad  h_{\gamma}(\tu,\tv) =\norm{\tilde{u}}^2 -\norm{\tilde{v}}^2-2\gamma =0
\label{e:simple2}
\end{eqnarray}
always has a solution, i.e., $P_{\Ct_{\gamma}}(\tu_{0},\tv_{0})\neq\varnothing$.
\end{proposition}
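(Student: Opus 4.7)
The plan is to exploit the rotational symmetry of both the objective $f_\gamma$ and the constraint $h_\gamma$ (which only depend on $\tu,\tv$ through their norms and their inner products with $\tu_0,\tv_0$) to reduce the infinite-dimensional problem to a one-dimensional optimization on a branch of a hyperbola in $\RR_+\times\RR_+$, where coercivity and continuity give existence immediately.

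First I would fix $a,b\geq 0$ with $a^{2}-b^{2}=2\gamma$ and consider the restricted problem of minimizing $f_\gamma(\tu,\tv)$ over all $(\tu,\tv)$ with $\|\tu\|=a$ and $\|\tv\|=b$. Expanding gives
\begin{equation*}
f_\gamma(\tu,\tv)=a^{2}+b^{2}+\|\tu_0\|^{2}+\|\tv_0\|^{2}-2\scal{\tu}{\tu_0}-2\scal{\tv}{\tv_0},
\end{equation*}
so minimizing over $(\tu,\tv)$ in the product of spheres amounts to maximizing $\scal{\tu}{\tu_0}+\scal{\tv}{\tv_0}$. By Cauchy--Schwarz, the maxima are $a\|\tu_0\|$ and $b\|\tv_0\|$, attained at $\tu=(a/\|\tu_0\|)\tu_0$ and $\tv=(b/\|\tv_0\|)\tv_0$ (or, when $\tu_0=0$ or $\tv_0=0$, at any unit-sphere point of the appropriate radius). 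In all cases the minimum equals $(a-\|\tu_0\|)^{2}+(b-\|\tv_0\|)^{2}$.

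Next I would parametrize the admissible pairs $(a,b)$ with $a^{2}-b^{2}=2\gamma$, $a,b\geq 0$, by $b\in[0,+\infty)$ and $a=\sqrt{b^{2}+2\gamma}$, so that the original problem reduces to
\begin{equation*}
\min_{b\geq 0}\;g(b):=\bigl(\sqrt{b^{2}+2\gamma}-\|\tu_0\|\bigr)^{2}+(b-\|\tv_0\|)^{2}.
\end{equation*}
The function $g$ is continuous on $[0,+\infty)$ and coercive, since $g(b)\to +\infty$ as $b\to +\infty$. Hence $g$ attains its minimum at some $b^{*}\geq 0$; set $a^{*}=\sqrt{(b^{*})^{2}+2\gamma}$ and take $\tu^{*}=(a^{*}/\|\tu_0\|)\tu_0$ when $\tu_0\neq 0$ (any vector of norm $a^{*}$ otherwise), and analogously $\tv^{*}$. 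Then $(\tu^{*},\tv^{*})\in\Ct_\gamma$ achieves $f_\gamma(\tu^{*},\tv^{*})=g(b^{*})=\inf f_\gamma|_{\Ct_\gamma}$, so $P_{\Ct_\gamma}(\tu_0,\tv_0)\neq\varnothing$.

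The only mild technical step is handling $\tu_0=0$ or $\tv_0=0$: in such cases the Cauchy--Schwarz upper bound $0$ is attained at every sphere point, so any direction works and existence (though not uniqueness) still follows. I expect no deeper obstacle: because the constraint $h_\gamma$ is invariant under the full orthogonal group acting independently on each factor, the problem has enough symmetry to collapse to a planar problem where the minimum is visibly attained by coercivity. This sidesteps the weak-closure failure noted in \cref{p:weak}.
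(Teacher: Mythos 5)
Your proposal is correct and rests on the same key idea as the paper's proof: use Cauchy--Schwarz to restrict attention to $\tu,\tv$ positively collinear with $\tu_0,\tv_0$, thereby collapsing the problem to finitely many real variables, and then conclude by continuity and coercivity. The only (minor) differences are that you push the reduction one step further to an unconstrained one-dimensional problem in $b$ rather than stopping at the paper's two-dimensional constrained problem in $(\alpha,\beta)$, and that you treat the degenerate cases $\tu_0=0$ or $\tv_0=0$ explicitly where the paper dismisses them as ``similar.''
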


\begin{proof} We shall illustrate only the case $\tu_{0}\neq 0, \tv_{0}\neq 0$, since
the other cases are similar. We claim that the optimization problem is essentially
$2$-dimensional. To this end, we expand
\begin{align}
f_{\gamma}(\tu,\tv) &
=\underbrace{\|\tu\|^2-2\scal{\tu}{\tu_{0}}+\|\tu_{0}\|^2}+\underbrace{\|\tv\|^2-2\scal{\tv}{\tv_{0}}+\|\tv_{0}\|^2}.
\end{align}
The constraint
$$h_{\gamma}(\tu,\tv):=\norm{\tilde{u}}^2 -\norm{\tilde{v}}^2-2\gamma =0$$
means that only the norms $\|\tu\|$ and $\|\tv\|$ matter.
With $\|\tu\|$ fixed, the Cauchy-Schwarz inequality in a Hilbert space, see, e.g., \cite{kreyszig},
shows that $\tu\mapsto\scal{\tu}{\tu_{0}}$
will be larger so that
the first underlined part in $f_{\gamma}$ will be smaller when
$\tu$ and $\tu_{0}$ are positively co-linear, i.e, $\tu=\alpha\tu_{0}$ for some
$\alpha\geq 0$. Similarly, for fixed $\|v\|$ the second underlined part in
$f_{\gamma}$ will be smaller when
$\tv=\beta\tv_{0}$ for some $\beta\geq 0$. It follows that the optimization
problem given by \eqref{e:simple1}-\eqref{e:simple2} is equivalent to
\begin{align}
\text{minimize} &\quad g(\alpha,\beta):=(1-\alpha)^2\|\tilde{u}_{0}\|^2+(1-\beta)^2\|
\tilde{v}_{0}\|^2\label{e:sunday1}\\
\text{subject to} & \quad g_{1}(\alpha,\beta):=\alpha^2\norm{\tilde{u}_{0}}^2 -
\beta^2\norm{\tilde{v}_{0}}^2-2\gamma =0, \alpha\geq 0, \beta\geq 0.\label{e:sunday2}
\end{align}
Because $g:\RR^2\rightarrow\RR$ is continuous and coercive, and $g_{1}:\RR^2\rightarrow\RR$ is
continuous, we conclude that the optimization problem given by
\eqref{e:sunday1}-\eqref{e:sunday2} has a solution.
\end{proof}

We are now ready for the investigation of projections onto $\Ct_{1}, \Ct_{\gamma}$ and $C_{\gamma}$.

\subsection{Finding the projection $P_{\Ct_{1}}$}
Note that
\begin{equation}
\label{e:ngradients}
\nabla f_{1}(u,v) = (2(u-u_0),2(v-v_0))
\quad\text{and}\quad
\nabla h_{1}(u,v) = (2u,-2v).
\end{equation}
By \cite[Proposition~4.1.1]{Bertsekas}, every solution of ($\tilde{P}_{1}$) satisfies
the necessarily conditions, i.e., the KKT system of ($\tilde{P}_{1}$), given by
\begin{align}
(1+\lambda) u = u_0 \label{eq:l1}\\
(1 - \lambda) v = v_0 \label{eq:l2}\\ \|u\|^2-\|v\|^2-2=0\label{eq:l3}.
\end{align}
\begin{lemma}\label{l:bounds}
Let $u_0\neq 0,v_0\neq 0$. If $(u, v, \lambda)$ verifies \eqref{eq:l1}--\eqref{eq:l3} and
$(u,v)$ is an optimal solution of problem ($\tilde{P}_{1}$), then
the Lagrange multiplier $\lambda$
 satisfies $|\lambda|<1$.
\end{lemma}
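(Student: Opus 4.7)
The plan is to exploit Proposition~\ref{p:two:dim}, whose proof establishes that any optimal $(u,v)$ for $(\tilde{P}_1)$ is \emph{positively} collinear with $(u_0,v_0)$, together with the explicit parametrization of $(u,v)$ coming from the KKT equations~\eqref{eq:l1}--\eqref{eq:l2}.

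First I would exclude $\lambda=\pm 1$. Indeed, substituting $\lambda=-1$ into \eqref{eq:l1} forces $u_0=0$, and substituting $\lambda=1$ into \eqref{eq:l2} forces $v_0=0$; both contradict the hypothesis $u_0\ne 0\ne v_0$. Hence $1\pm\lambda\ne 0$, and \eqref{eq:l1}--\eqref{eq:l2} can be rewritten as
\[
u=\frac{u_0}{1+\lambda},\qquad v=\frac{v_0}{1-\lambda}.
\]

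Next, since $(u,v)$ is an optimal (not merely stationary) solution of $(\tilde{P}_1)$, the Cauchy--Schwarz reduction inside the proof of Proposition~\ref{p:two:dim} (taken with $\gamma=1$) applies and forces $u=\alpha u_0$ and $v=\beta v_0$ for some $\alpha,\beta\ge 0$. The constraint~\eqref{eq:l3} gives $\|u\|^2=\|v\|^2+2\ge 2>0$, so $u\ne 0$ and therefore $\alpha>0$; the formula $v=v_0/(1-\lambda)$ together with $v_0\ne 0$ yields $v\ne 0$, whence $\beta>0$. Matching $\alpha u_0=u_0/(1+\lambda)$ and cancelling $u_0\ne 0$ gives $\alpha(1+\lambda)=1$, so $1+\lambda=1/\alpha>0$, i.e.\ $\lambda>-1$. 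Symmetrically, $\beta(1-\lambda)=1$ gives $1-\lambda>0$, i.e.\ $\lambda<1$. Combining the two inequalities delivers $|\lambda|<1$.

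I do not anticipate a substantive obstacle here: the real work has already been carried out in Proposition~\ref{p:two:dim}, and what remains is a short sign analysis of the KKT multiplier. The only delicate point is to invoke optimality, rather than mere stationarity, so that the positive-collinearity conclusion of Proposition~\ref{p:two:dim} actually transfers to the KKT point $(u,v)$ under consideration.
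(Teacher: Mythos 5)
Your proof is correct and follows essentially the same route as the paper's: both extract the signs of $1+\lambda$ and $1-\lambda$ from the KKT equations after using optimality, together with the fact that the constraint depends only on $\|u\|$ and $\|v\|$, to force $u$ and $v$ to align with $u_0$ and $v_0$. The paper obtains the slightly weaker conditions $\scal{u_0}{u}\ge 0$ and $\scal{v_0}{v}\ge 0$ directly from the sign-flip symmetry $u\mapsto -u$, $v\mapsto -v$ of the constraint, whereas you invoke the full positive collinearity from the Cauchy--Schwarz argument of Proposition~\ref{p:two:dim}; either suffices.
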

\begin{proof}
The constraint set
$$\|u\|^2-\|v\|^2=2,$$
has a special structure: replacing $u$ by $-u$, or $v$ by $-v$
the constraint is still verified.
Also consider
$$f(u,v)=\|u-u_{0}\|^2+\|v-v_{0}\|^2=\|u_{0}\|^2-2\scal{u_{0}}{u}+
\|u\|^2+\|v_0\|^2-2\scal{v_{0}}{v}+\|v\|^2,$$
Given $u_{0} \text{ and }
v_{0}$, for fixed $\|u\|$ and $\|v\|$, $f(u,v)$ becomes smaller if one choose
$\scal{u_{0}}{u}\geq 0$, and
$\scal{v_{0}}{v}\geq 0$. Indeed, one can do so by replacing $u$ by $-u$ or $v$ by $-v$ if needed.
Now by \eqref{eq:l1} and \eqref{eq:l2},
$$(1+\lambda)\scal{u_{0}}{u}=\|u_{0}\|^2, \text{ and }
(1-\lambda)\scal{v_{0}}{v}=\|v_{0}\|^2.$$
Because $u_{0}\neq 0, v_{0}\neq 0$, we have $\scal{u_{0}}{u}>0, \scal{v_{0}}{v}>0$ so that
$1+\lambda>0, 1-\lambda>0$. Hence
$|\lambda|<1.$
\end{proof}

\begin{proposition}
\label{p:2011d4.1}
Let $u_0\neq0, v_0\neq 0$. Define $p \coloneqq\norm{u_0}^2 - \norm{v_0}^2 $ and $q \coloneqq\norm{u_0}^2 + \norm{v_0}^2$.
Suppose that $u,v\in\Xt$ and $\lambda\in\RR$ verify \eqref{eq:l1}-\eqref{eq:l3} and $(u,v)$ is an optimal solution
to ($\tilde{P}_{1}$).
Then
the following hold:
\begin{enumerate}
\item
\label{p:2011411}
$u = \frac{u_0}{(1+\lambda)}$, $v = \frac{v_0}{(1-\lambda)}$, and
$(1-\lambda)^2\norm{u_0}^2 - (1+\lambda)^2\norm{v_0}^2
= 2(1-\lambda^2)^2$.

\item
\label{p:2011412}
The objective function has
$
f_{1}(u,v) = \lambda^2\bigg(\frac{\|u_0\|^2}{(1+\lambda)^2}+\frac{\|v_0\|^2}{(1-\lambda)^2} \bigg).
$
\item
\label{p:2011413}
$\lambda$ is the unique solution of
\begin{equation}
\label{eq:main}
H(\lambda) \coloneqq \frac{(\lambda^2 + 1 ) p - 2\lambda q}{2(1-\lambda^2)^2} -1  = 0
\end{equation}
in $\left]-1,1\right[$.
\end{enumerate}
\end{proposition}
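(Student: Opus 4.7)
My plan is to handle the three items in sequence, with only the uniqueness in (iii) requiring real care. For (i), Lemma~\ref{l:bounds} gives $|\lambda|<1$, so $1+\lambda$ and $1-\lambda$ are both nonzero; isolating $u$ in \eqref{eq:l1} and $v$ in \eqref{eq:l2} yields the asserted closed forms, and substituting into the constraint \eqref{eq:l3} gives
$$\frac{\|u_0\|^2}{(1+\lambda)^2}-\frac{\|v_0\|^2}{(1-\lambda)^2}=2.$$
Multiplying through by $(1-\lambda^2)^2=(1+\lambda)^2(1-\lambda)^2>0$ then produces the displayed identity.

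For (ii), the forms in (i) yield $u-u_0=-\lambda u_0/(1+\lambda)$ and $v-v_0=\lambda v_0/(1-\lambda)$ by direct computation; taking squared norms and summing immediately recovers the claimed expression for $f_1(u,v)$.

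For (iii), the equation $H(\lambda)=0$ follows from (i) by expanding $(1\mp\lambda)^2=1\mp 2\lambda+\lambda^2$ and regrouping using $p=\|u_0\|^2-\|v_0\|^2$ and $q=\|u_0\|^2+\|v_0\|^2$. The substantive part is uniqueness in $(-1,1)$. Here I would not work with $H$ directly, as it is a quartic-over-quartic whose monotonicity is not transparent; instead, I would use the auxiliary function
$$G(\lambda):=\frac{\|u_0\|^2}{(1+\lambda)^2}-\frac{\|v_0\|^2}{(1-\lambda)^2}$$
already appearing in the proof of (i), noting that $H(\lambda)=0$ is equivalent to $G(\lambda)=2$. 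Differentiation gives
$$G'(\lambda)=-\frac{2\|u_0\|^2}{(1+\lambda)^3}-\frac{2\|v_0\|^2}{(1-\lambda)^3}<0$$
on $(-1,1)$, since both denominators are positive and $u_0,v_0\neq 0$. Combined with $G(\lambda)\to+\infty$ as $\lambda\to-1^{+}$ and $G(\lambda)\to-\infty$ as $\lambda\to 1^{-}$, strict monotonicity and the intermediate value theorem yield exactly one solution of $G(\lambda)=2$ in $(-1,1)$.

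The main obstacle is precisely this uniqueness claim: the algebraic trick of postponing the multiplication by $(1-\lambda^2)^2$ converts an opaque quartic equation into a sum of two strictly monotone terms, after which monotonicity and the boundary behavior make the argument immediate.
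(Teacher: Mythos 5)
Your proof is correct, and items (i) and (ii) follow the paper's argument essentially verbatim. The only genuine divergence is in the uniqueness part of (iii). The paper works with $H$ directly: it computes
\[
H'(\lambda)=\frac{1}{(1-\lambda^2)^3}\bigl(-q(1+3\lambda^2)+p(\lambda^3+3\lambda)\bigr)
\]
and then runs a chain of estimates using $q>\abs{p}$ and $\abs{\lambda}<1$ to arrive at $H'(\lambda)\le -q/(1+\abs{\lambda})^3<0$, before invoking the vertical asymptotes at $\lambda=\pm 1$ and the intermediate value theorem. You instead observe that $H(\lambda)=\tfrac12 G(\lambda)-1$ with $G(\lambda)=\norm{u_0}^2/(1+\lambda)^2-\norm{v_0}^2/(1-\lambda)^2$, so strict monotonicity is visible term by term from
\[
G'(\lambda)=-\frac{2\norm{u_0}^2}{(1+\lambda)^3}-\frac{2\norm{v_0}^2}{(1-\lambda)^3}<0
\quad\text{on }\left]-1,1\right[,
\]
with the same boundary limits. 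Your route buys a shorter and more transparent monotonicity proof (no auxiliary inequality manipulation with $p$ and $q$ is needed, only positivity of $(1\pm\lambda)^3$ and $u_0\neq0$, $v_0\neq0$); the paper's route keeps everything phrased in the quantities $p$ and $q$ that appear in the final statement of \eqref{eq:main} and in its later reuse in Theorems~\ref{t:hyperbola1} and \ref{t:201147}. Both arguments establish the same two facts---strict decrease on $\left]-1,1\right[$ and range all of $\RR$---so the conclusions coincide.
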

\begin{proof}
\cref{p:2011411}: Because $u_0\neq0, v_0\neq0$, we obtain $|\lambda|<1$,
$u = \frac{u_0}{(1+\lambda)}$ and $v = \frac{v_0}{(1-\lambda)}$ by
Lemma~\ref{l:bounds} and \eqref{eq:l1}-\eqref{eq:l2}.
Then we have the following equivalences
\begin{align}
\norm{u}^2 - \norm{v}^2 =2
&\Leftrightarrow
\frac{\norm{u_0}^2}{(1+\lambda)^2} - \frac{\norm{v_0}^2}{(1-\lambda)^2} = 2 \\
&\Leftrightarrow
(1-\lambda)^2\norm{u_0}^2  - (1+\lambda)^2 \norm{v_0}^2 =2(1-\lambda)^2(1+\lambda)^2\\
&\Leftrightarrow
(1-\lambda)^2\norm{u_0}^2  - (1+\lambda)^2 \norm{v_0}^2 =2(1-\lambda^2)^2.
\end{align}
\cref{p:2011412}: Substitute $u = \frac{u_0}{(1+\lambda)}$ and $v = \frac{v_0}{(1-\lambda)}$ in $f_{1}$.\\
\cref{p:2011413}:
By \cref{p:2011411}, we have
\begin{equation}
\label{e:2011416}
(1-\lambda)^2\norm{u_0}^2 - (1+\lambda)^2 \norm{v_0}^2 =2(1-\lambda^2)^2.
\end{equation}
Since
\begin{align}
&\hspace{-1cm}
(1-\lambda)^2\norm{u_0}^2 - (1+\lambda)^2 \norm{v_0}^2\\
&=
\big(1+\lambda^2-2\lambda\big)\|u_0\|^2 - \big(1+\lambda^2+2\lambda\big)\|v_0\|^2\\
&=
\lambda^2\big(\|u_0\|^2-\|v_0\|^2\big)+\big(\|u_0\|^2-\|v_0\|^2\big)-2\lambda\big(\|u_0\|^2+\|v_0\|^2\big) \\
&=\lambda^2 p+p-2\lambda q, \label{e:num}
\end{align}
using \cref{e:num} on left side of \cref{e:2011416}, we obtain $\lambda^2 p+p-2\lambda q =2(1-\lambda^2)^2$,
a univariate quartic equation in $\lam$, equivalently,
\begin{align}
\label{eq:quartic1}
 H(\lambda) \coloneqq \frac{(\lambda^2 + 1 ) p - 2\lambda q}{2(1-\lambda^2)^2} -1  = 0.
\end{align}
We show that \cref{eq:quartic1} has a unique solution in $]-1,1[$.
Because $u_0\neq0, v_0\neq0$,
we know that $q >|p|$,
and using it along-with $|\lambda|<1$ from \cref{l:bounds},
we get $p\lambda \leq |p||\lambda|\leq q|\lambda|$, and
\begin{align}
H'(\lambda)
&= \frac{1}{(1-\lambda^2)^3}\big(-q(1+3\lambda^2) + p(\lambda^3+3\lambda)\big)\\
&\leq\frac{1}{(1-\lambda^2)^3}\big(-q(1+3\lambda^2) + |p||\lambda|(\lambda^2+3)\big)\\
&\leq\frac{1}{(1-\lambda^2)^3}\big(-q(1+3\lambda^2)+ q|\lambda|(\lambda^2+3)\big)\\
&= \frac{q}{(1-\lambda^2)^3}\big((|\lambda|-1)^3\big)
\\
&= \frac{-q}{((1-|\lambda|)(1+|\lambda|))^3}\big((1-|\lambda|)^3\big)
\\ &= \frac{-q}{(1+|\lambda|)^3}.
\end{align}
Since $q = \|u_0\|^2 + \|v_0\|^2 > 0$ and $(1+|\lambda|)^3 > 0$, we have $H'(\lambda)<0$ on $]-1,1[$, which implies  $H(\lambda)$
is strictly decreasing on $]-1,1[$. Notice that $\lambda = \pm 1$ are vertical asymptotes because
at $\lambda = \pm 1$ the numerator of $H(\lambda)$ is, $H(-1) = 2(p+q)=4\|u_0\|^2 > 0$ and $H(1) = 2(p - q) = - 4 \|v_0\|^2 < 0$, so
that $\lim_{\lambda \to 1^{-}} H(\lambda) = - \infty$ and $\lim_{\lambda \to -1^{+}} H(\lambda) = + \infty$. Since
$H$ is continuous, strictly decreasing, and its range is $]-\infty, \infty[$, we conclude
that $H(\lambda) = 0$ has a unique zero in $]-1,1[$.
\end{proof}

\begin{remark}
As indicated in \cite{elser2019learning},
an approximate solution to \eqref{eq:main} can be found by the Bisection method, Newton's method, or a combined
version. See also \cite{burden2011analisis, robust}.
\end{remark}

\begin{theorem}
\label{t:hyperbola1}
Let $u_0,v_0 \in \Xt$, $u_{0} \neq 0, v_{0} \neq 0$. Then $\proj{\Ct_1}(u_0,v_0)$ is a singleton, and
 $$\proj{\Ct_1}(u_0,v_0)
  =
 \left\{ \begin{bmatrix}
   \frac{u_0}{1+\lambda}\\
   \addlinespace
   \frac{v_0}{1-\lambda}
 \end{bmatrix}
 \right\}
  $$
 in which $\lambda$ is the unique root of $H(\lambda)=0$ in $]-1,1[$ where
  \begin{align*}
     H(\lambda) =
      \frac{(\lambda^2 + 1 ) p - 2\lambda q}{2(1-\lambda^2)^2} -1, \text{ and }
        \\
      p =\norm{u_0}^2 - \norm{v_0}^2, \quad q =\norm{u_0}^2 + \norm{v_0}^2.
      \end{align*}
\end{theorem}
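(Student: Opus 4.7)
The plan is to assemble the pieces already established. Existence of an optimal solution to $(\tilde{P}_1)$ (and hence non-emptiness of $\proj{\tilde{C}_1}(u_0,v_0)$) follows immediately from \cref{p:two:dim}. So it remains to show that (a) every optimal pair $(u,v)$ is given by the stated formula, and (b) $\lambda$ is determined uniquely, which will simultaneously establish the singleton property.

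First I would verify the constraint qualification so that the Lagrange multiplier rule applies: at any feasible point $(u,v)\in \tilde{C}_1$ one has $\nabla h_1(u,v)=(2u,-2v)$, and this cannot vanish, for if $u=v=0$ then $\norm{u}^2-\norm{v}^2-2=-2\neq 0$, contradicting feasibility. Hence by the Lagrange multiplier theorem (e.g., \cite[Proposition~4.1.1]{Bertsekas}), any optimal $(u,v)$ admits a multiplier $\lambda\in\RR$ such that the KKT system \eqref{eq:l1}--\eqref{eq:l3} holds. Since $u_0\neq 0$ and $v_0\neq 0$, \cref{l:bounds} gives $|\lambda|<1$, so both $1+\lambda$ and $1-\lambda$ are nonzero and \cref{p:2011411} of \cref{p:2011d4.1} yields
\begin{equation*}
u=\frac{u_0}{1+\lambda}, \qquad v=\frac{v_0}{1-\lambda}.
\end{equation*}

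Next, substituting these expressions into the constraint $\norm{u}^2-\norm{v}^2=2$ produces exactly the equation $H(\lambda)=0$ of \cref{p:2011413}, and that proposition already establishes that $H$ has a unique root in $]-1,1[$ (via the sign analysis of $H$ at the endpoints together with $H'<0$ on $]-1,1[$). Therefore the admissible $\lambda$ is uniquely determined, and consequently the pair $(u,v)$ is uniquely determined by the displayed formula. Combining this with the existence guaranteed by \cref{p:two:dim} proves both that $\proj{\tilde{C}_1}(u_0,v_0)$ is a singleton and that it equals the stated expression.

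There is no real obstacle here beyond a careful bookkeeping step: the only thing to be wary of is that the KKT conditions are necessary but not in general sufficient, so one must invoke existence from \cref{p:two:dim} separately and then use uniqueness of the KKT multiplier in $]-1,1[$ (from \cref{p:2011d4.1}\cref{p:2011413}) to conclude that the unique KKT point is the unique minimizer. In other words, the logic is: a minimizer exists; every minimizer satisfies the KKT equations; the KKT equations have exactly one solution with $|\lambda|<1$; hence the minimizer is unique and given by the formula.
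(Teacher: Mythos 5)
Your proposal is correct and follows essentially the same route as the paper: the paper's proof is the single line ``Apply \cref{p:2011d4.1}'', which implicitly bundles exactly the ingredients you spell out (existence via \cref{p:two:dim} with $\gamma=1$, necessity of the KKT system \eqref{eq:l1}--\eqref{eq:l3}, the bound $|\lambda|<1$ from \cref{l:bounds}, and uniqueness of the root of $H$ from \cref{p:2011d4.1}\cref{p:2011413}). Your explicit verification of the constraint qualification ($\nabla h_1$ cannot vanish on $\Ct_1$) and your remark that KKT necessity plus existence plus uniqueness of the multiplier is what actually forces the singleton are details the paper leaves unstated, and they are correct.
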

\begin{proof}
Apply \cref{p:2011d4.1}.
\end{proof}
\begin{theorem}
\label{p:2011d4.5}
  Let $u_0,v_0 \in \Xt$ with either $u_0=0$ or $v_0=0$.
\begin{enumerate}
\item
\label{p:2011d4.5a} When $u_0 = 0,$
we have
    \begin{align}
    \label{f:1}
    \proj{\Ct_1}(0,v_0) = \left\{
    \begin{bmatrix}
    u\\
    \frac{v_0}{2}
    \end{bmatrix}
    \bigg|\ \|u\|^2 = 2+ \frac{\|v_0\|^2}{4}
    \right\}.\end{align}
    \item
    \label{p:2011d4.5b}
    a) When $ v_0 = 0$ and $\|u_0\|\geq 2\sqrt{2}$, we have
    \begin{align}
    \label{f:2a}
    \proj{\Ct_1}(u_0,0) = \left\{
    \begin{bmatrix}
    \frac{u_0}{2}\\
    v\end{bmatrix}
    \bigg| \ \|v\|^2 = \frac{\|u_0\|^2}{4}-2\right\}.
    \end{align}
    b) When $v_0 = 0$ and $ 0<\|u_0\|< 2\sqrt{2}$, we have
    \begin{align}
    \label{f:2b}
    \proj{\Ct_1}(u_0,0) = \left\{\begin{bmatrix}
    \sqrt{2}\frac{u_0}{\|u_0\|}\\
    0\end{bmatrix}
    \right\}.\end{align}
\end{enumerate}
\end{theorem}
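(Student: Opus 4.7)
The plan is to combine the KKT necessary conditions \cref{eq:l1}--\cref{eq:l3} with the existence guarantee from \cref{p:two:dim}, handling the two degenerate cases separately.

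For \cref{p:2011d4.5a}, assume $u_0=0$ and let $(u,v)$ be any optimal solution with multiplier $\lambda$. Then \cref{eq:l1} reads $(1+\lambda)u=0$. The alternative $u=0$ is incompatible with \cref{eq:l3}, which would force $\|v\|^2=-2$; hence $\lambda=-1$, and \cref{eq:l2} yields $v=v_0/2$. The constraint \cref{eq:l3} then determines $\|u\|^2=2+\|v_0\|^2/4$, with $u$ otherwise free. Conversely, on this candidate set the objective equals the constant $2+\|v_0\|^2/2$, so \cref{p:two:dim} confirms that every such pair is a minimizer, yielding \cref{f:1}.

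For \cref{p:2011d4.5b}, assume $v_0=0$ and $u_0\neq 0$. Equation \cref{eq:l2} now gives either $\lambda=1$ or $v=0$, producing two branches. The $\lambda=1$ branch forces $u=u_0/2$ via \cref{eq:l1} and $\|v\|^2=\|u_0\|^2/4-2$ via \cref{eq:l3}; this is feasible only when $\|u_0\|\geq 2\sqrt{2}$. The $v=0$ branch reduces the problem to minimizing $\|u-u_0\|^2$ on the sphere $\|u\|^2=2$, whose unique solution by Cauchy--Schwarz is $u=\sqrt{2}\,u_0/\|u_0\|$ (the antipodal sign is strictly worse). A direct computation of the two candidate objective values gives the difference $\tfrac{1}{2}(\|u_0\|-2\sqrt{2})^2\geq 0$ (second branch minus first), so whenever the $\lambda=1$ branch is feasible it is optimal, giving \cref{f:2a}; otherwise only the $v=0$ branch remains, giving \cref{f:2b}.

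The main obstacle, beyond bookkeeping, is that in both parts the projection can be set-valued (a sphere rather than a point). This forces us to invoke \cref{p:two:dim} to assert existence, then argue via the KKT analysis that the described sets exhaust all minimizers and contain no spurious ones; the comparison of two quadratic expressions in $\|u_0\|$ is the only nontrivial computation.
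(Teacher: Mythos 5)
Your proposal is correct and follows essentially the same route as the paper: a case split on the KKT alternatives $(1+\lambda)u=0$ and $(1-\lambda)v=0$, followed by a direct comparison of the candidate objective values, with the decisive inequality being $(\|u_0\|-\sqrt2)^2-\bigl(\tfrac{\|u_0\|^2}{2}-2\bigr)=\tfrac12(\|u_0\|-2\sqrt2)^2\ge 0$. The only cosmetic differences are that you dispatch the sign ambiguity in the $v=0$ branch via Cauchy--Schwarz rather than the paper's two explicit subcases, and you invoke \cref{p:two:dim} explicitly for existence, which the paper leaves implicit.
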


\begin{proof} By \eqref{eq:l1}--\eqref{eq:l3}, $(u,v)\in \proj{\Ct_1}(u_{0},v_0)$ satisfies:
\begin{align}
        (1+\lambda) u = u_0 \label{0.1}\\
        (1 - \lambda) v = v_0 \label{0.2}\\
        \|u\|^2 - \|v\|^2=2    \label{0.3},
\end{align}
for some $\lambda\in\RR$.

\noindent \ref{p:2011d4.5a}: $u_0 = 0,v_0\neq0$.
\eqref{0.1} yields
 $(1+\lambda)u=0 \Rightarrow$ either $1+\lambda=0$ or $u=0$. We consider two cases.

\textbf{Case 1:}~$1+\lambda=0$, i.e., $\lambda =-1$.
By (\ref{0.2}),
$2v =v_0 \Rightarrow v = \frac{v_0}{2}$ and then
by (\ref{0.3}), $\|u\|^2-\|\frac{v_0}{2}\|^2=2 \Rightarrow \|u\|^2=2+\frac{\|v_0\|^2}{4}$. So, $(u,v)=\left(u,\frac{v_0}{2}\right)$ and
$\|u\|^2=2+\frac{\|v_0\|^2}{4}$. The objective function is
\begin{align*}
    f(u,v) =\|u-u_0\|^2+\|v-v_0\|^2
= \|u\|^2+\norm{\frac{v_0}{2}}^2
    = 2+\frac{\|v_0\|^2}{4}+\frac{\|v_0\|^2}{4}
    = 2+\frac{\|v_0\|^2}{2}.
\end{align*}

\textbf{Case 2:}~$u=0$. By (\ref{0.3}), $(0-\|v\|^2)=2$, which is impossible.\\
Combining both cases we have the formula (\ref{f:1}).

\noindent \ref{p:2011d4.5b}: $u_0 \neq 0$, but $v_0 = 0$, which implies $(1-\lambda)v =0$ by (\ref{0.2}).
We consider two cases:

\textbf{Case 1:} $1-\lambda=0$, i.e., $ \lambda = 1$.
By (\ref{0.1}), $2u = u_0 \Rightarrow u = \frac{u_0}{2}$ and then by (\ref{0.3}), $\|v\|^2 = \frac{\|u_0\|^2}{4}-2 \Rightarrow \frac{\|u_0\|^2}{4}-2 \geq 0 \Rightarrow \|u_0\|^2 \geq 8$. Thus, $(u,v)=\left(\frac{u_0}{2},v\right)$ with
$\|v\|^2=\frac{\|u_0\|^2}{4}-2$, where the objective is
\begin{align*}
    f(u,v) = \|u-u_0\|^2+\|v-v_0\|^2
= \frac{\|u_0\|^2}{4}+\frac{\|u_0\|^2}{4} -2
    = \frac{\|u_0\|^2}{2} -2.
\end{align*}
Note that Case 1 needs $\|u_0\|\geq 2\sqrt{2}$.

\textbf{Case 2:} $v=0$. Then $\|u\|^2=2 \Rightarrow \|u\|=\sqrt{2}$
by \eqref{0.3}. By (\ref{0.1}), $|1+\lambda|\|u\|=\|u_0\| \Rightarrow |1+\lambda| = \frac{\|u_0\|}{\sqrt{2}} \Rightarrow 1+\lambda = \pm \frac{\|u_0\|}{\sqrt{2}}.$
\\ \textbf{Subcase 1:}
$1+\lambda = -\frac{\|u_0\|}{\sqrt{2}}$. By (\ref{0.1}), $u = \frac{u_0}{(-\frac{\|u_0\|}{\sqrt{2}})} = - \frac{\sqrt{2} u_0}{\|u_0\|}$. Then
$$f(u,0)=\|u-u_0\|^2+\|0-v_0\|^2= \norm{-\frac{\sqrt{2} u_0}{\|u_0\|}-u_0}^2
    = 
    (\|u_0\|+\sqrt{2})^2.$$
\\ \textbf{Subcase 2:}
$1+\lambda = \frac{\|u_0\|}{\sqrt{2}}.$
By \eqref{0.1},
$u = \frac{\sqrt{2} u_0}{\|u_0\|}$. Then
$$f(u,0)=\|u-u_0\|^2+\|0-v_0\|^2= \norm{u_0-\frac{\sqrt{2} u_0}{\|u_0\|}}^2
= (\|u_0\|-\sqrt{2})^2. $$
Comparing Subcase 1 and Subcase 2, we obtain $|\|u_0\|-\sqrt{2}|<\|u_0\|+\sqrt{2}$ because $\|u_0\|\neq 0$.
That is, Subcase 2 gives a smaller value at $(u,0)$ with $u=\sqrt{2}\frac{u_0}{\|u_0\|}$. We need to compare it
to Case 1 whenever it happens. Note that
$$\frac{\|u_0\|^2}{2}-2<(\|u_0\|-\sqrt{2})^2\quad\text{whenever}\quad\|u_0\|\neq 2\sqrt{2}.$$
Indeed, we have
$$
    \frac{\|u_0\|^2}{2}-2<\|u_0\|^2-2\sqrt{2}\|u_0\|+2 \Longleftrightarrow \frac{\|u_0\|^2}{2} -2\sqrt{2}\|u_0\|+4 > 0\\\Longleftrightarrow
    (\|u_0\|-2\sqrt{2})^2 >0,
$$
which always holds if $\|u_0\|\neq 2\sqrt{2}$. Hence, the nearest points are given by
\begin{align}
\label{smile1}
    \left(\frac{u_0}{2},v\right)\quad
    \text{with}\quad \|v\|^2 = \frac{\|u_0\|^2}{4}-2,\quad\text{ when}\quad \|u_0\|\neq 2\sqrt{2}
\end{align}
\begin{align}
\label{smile2}
\left(\frac{u_0}{2},0\right)
=\left(\sqrt{2}\frac{u_0}{\|u_0\|},0\right), \quad\text{ when}\quad \|u_0\|=2\sqrt{2}.
\end{align}
However, Case 1 occurs only when
$\|u_0\|\geq 2\sqrt{2}$. Hence, when
$\|u_0\|\geq 2\sqrt{2}$ the nearest points are given by
$$\left(\frac{u_0}{2},v\right)
\quad \text{with} \quad \|v\|^2 = \frac{\|u_0\|^2}{4}-2.$$
When $\|u_0\|<2\sqrt{2}$, Case 1 is impossible.
Then we only need to compare Subcase 1 and Subcase 2. Hence, the nearest point is
$$\left(\sqrt{2}\frac{u_0}{\|u_0\|},0\right)\quad\text{when}\quad\|u_0\|<2\sqrt{2}.$$
Finally, since $\|u_0\|=2\sqrt{2} \Rightarrow \|v\|^2=2-2=0$, (\ref{smile1}) gives $\left(\frac{u_0}{2},
0\right)$. Also $\frac{\sqrt{2}}{\|u_0\|}=\frac{1}{2}$, therefore $\sqrt{2}\frac{u_0}{\|u_0\|}=\thalb u_0$.
It follows that
$\left(\frac{u_0}{2},0\right)=\left(\sqrt{2}\frac{u_0}{\|u_0\|},0\right)$
when $\|u_0\|=2\sqrt{2}$, which implies that
(\ref{smile2}) can be obtained from (\ref{smile1}) when $\|u_0\|=2\sqrt{2}$.
Hence formulas (\ref{f:2a}) and (\ref{f:2b}) hold.
\end{proof}


\subsection{Finding the projection $P_{\Ct_{\gamma}}$}

\noindent $\proj{\Ct_{\gamma}}$ can be found via $\proj{\Ct_1}$.

\begin{theorem}
\label{t:201147}
Let $\gamma>0, \tilde{u}_0,\tilde{v}_0 \in \Xt$, and
$\Ct_{\gamma} =
      \menge{(\tilde{u},\tilde{v})\in \Xt \times \Xt}{\norm{\tilde{u}}^2 -\norm{\tilde{v}}^2 = 2\gamma}$. Then the following hold:
\begin{enumerate}
\item \label{t:201147.a} When $\tilde{u}_0 \neq 0$ and $ \tilde{v}_0 \neq 0$,
we have
$$\proj{\Ct_{\gamma}}(\tilde{u}_0,\tilde{v}_0)
=\left\{\begin{bmatrix}
   \frac{\tilde{u}_0}{1+\lambda}\\
   \addlinespace
   \frac{\tilde{v}_0}{1-\lambda}
  \end{bmatrix}
  \right\},
  $$
in which $\lambda$ is the unique root of $H(\lambda)=0$ in $\left]-1, 1\right[$, where
  \begin{align*}
     H(\lambda) =
      \frac{(\lambda^2 + 1 ) p - 2\lambda q}{2(1-\lambda^2)^2} -\gamma,~
      p =\norm{\tilde{u}_0}^2 - \norm{\tilde{v}_0}^2,~ q =\norm{\tilde{u}_0}^2 + \norm{\tilde{v}_0}^2.
      \end{align*}
\item \label{t:201147.b} When $\tilde{u}_0 = 0,$
we have
    \begin{align}
    \label{k:1}
    \proj{\Ct_{\gamma}}(0,\tilde{v}_0) = \left\{
    \begin{bmatrix}\tilde{u}\\
    \frac{\tilde{v}_0}{2}\end{bmatrix}
    \bigg|\ \|\tilde{u}\|^2 =
    2\gamma+ \frac{\|\tilde{v}_0\|^2}{4}\right\}.\end{align}
    \item
    \label{t:201147.c}
    a) When $ \tilde{v}_0 = 0$ and $ \|\tilde{u}_0\|\geq 2\sqrt{2\gamma}$, we have
    \begin{align}
    \label{k:2a}
    \proj{\Ct_{\gamma}}(\tilde{u}_0,0) = \left\{
    \begin{bmatrix}
    \frac{\tilde{u}_0}{2}\\
    \tilde{v}
    \end{bmatrix}
    \bigg| \ \|\tilde{v}
    \|^2 = \frac{\|\tilde{u}_0\|^2}{4}-2\gamma\right\}.
    \end{align}
    b) When $ \tilde{v}_0 = 0$ and
    $0<\|\tilde{u}_0\|< 2\sqrt{2\gamma}$, we have
    \begin{align}
    \label{k:2b}
    \proj{\Ct_{\gamma}}(\tilde{u}_0,0) = \left\{
    \begin{bmatrix}
    \sqrt{2\gamma}\frac{\tilde{u}_0}{\|\tilde{u}_0\|}\\
    0
    \end{bmatrix}
    \right\}.
    \end{align}
\end{enumerate}
\end{theorem}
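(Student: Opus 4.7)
The plan is to reduce the computation of $\proj{\Ct_\gamma}$ to the already-solved case $\proj{\Ct_1}$ by invoking \cref{p:reformulate}\ref{i:scaling}, which says $\proj{\Ct_\gamma}=\sqrt{\gamma}\,\proj{\Ct_1}(\Id/\sqrt{\gamma})$. Concretely, for inputs $(\tu_0,\tv_0)$ I would set $u_0=\tu_0/\sqrt{\gamma}$ and $v_0=\tv_0/\sqrt{\gamma}$, compute $\proj{\Ct_1}(u_0,v_0)$ using \cref{t:hyperbola1} or \cref{p:2011d4.5}, and then scale the answer by $\sqrt{\gamma}$. The three cases of the theorem correspond exactly to the trichotomy in those two source results: $(\tu_0,\tv_0)$ both nonzero; $\tu_0=0$; and $\tv_0=0$ (with its own dichotomy by the magnitude of $\tu_0$).

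For part \ref{t:201147.a}, after substituting $(u_0,v_0)=(\tu_0,\tv_0)/\sqrt{\gamma}$ into \cref{t:hyperbola1}, the auxiliary quantities become $p'=\|u_0\|^2-\|v_0\|^2=p/\gamma$ and $q'=\|u_0\|^2+\|v_0\|^2=q/\gamma$, so the defining equation from \cref{t:hyperbola1},
\[
\frac{(\lambda^2+1)p'-2\lambda q'}{2(1-\lambda^2)^2}-1=0,
\]
multiplied through by $\gamma$ becomes precisely $H(\lambda)=0$ for the $H$ in the statement. Scaling the singleton $(u_0/(1+\lambda),v_0/(1-\lambda))$ by $\sqrt{\gamma}$ gives $(\tu_0/(1+\lambda),\tv_0/(1-\lambda))$, as claimed. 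The uniqueness of $\lambda$ in $\left]-1,1\right[$ transfers directly from \cref{t:hyperbola1}.

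For part \ref{t:201147.b}, I apply \cref{p:2011d4.5}\ref{p:2011d4.5a} with $v_0=\tv_0/\sqrt{\gamma}$, obtaining the solution set $\{(u,v_0/2):\|u\|^2=2+\|v_0\|^2/4\}$; scaling by $\sqrt{\gamma}$ and setting $\tu=\sqrt{\gamma}u$ turns the constraint into $\|\tu\|^2=2\gamma+\|\tv_0\|^2/4$ and the second component into $\tv_0/2$, recovering \eqref{k:1}. Parts \ref{t:201147.c}(a) and \ref{t:201147.c}(b) follow the same pattern from \cref{p:2011d4.5}\ref{p:2011d4.5b}: the threshold $\|u_0\|\geq 2\sqrt{2}$ becomes $\|\tu_0\|\geq 2\sqrt{2\gamma}$ after multiplying by $\sqrt{\gamma}$, and the scaled solution sets yield \eqref{k:2a} and \eqref{k:2b} (noting that $u_0/\|u_0\|=\tu_0/\|\tu_0\|$, so the unit direction is preserved).

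There is no substantive obstacle here; this is a routine change-of-variable argument leveraging \cref{p:reformulate}\ref{i:scaling}. The only point requiring care is the bookkeeping for the parameter in the quartic equation in part \ref{t:201147.a}, namely that the dilation by $\sqrt{\gamma}$ shifts the constant in $H$ from $1$ to $\gamma$ while preserving $p,q$ in their unscaled form; the shortest way to present this is simply to verify that $H_1(\lambda)=0$ for the scaled data is equivalent to $H(\lambda)=0$ for the original data, which is a one-line computation.
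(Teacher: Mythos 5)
Your proposal is correct and follows exactly the paper's own route: invoke Proposition~\ref{p:reformulate}\ref{i:scaling} to write $\proj{\Ct_{\gamma}}(\tu_0,\tv_0)=\sqrt{\gamma}\,\proj{\Ct_1}(\tu_0/\sqrt{\gamma},\tv_0/\sqrt{\gamma})$ and then apply \cref{t:hyperbola1} and \cref{p:2011d4.5}. The only difference is that you carry out the bookkeeping (the rescaling of $p$, $q$, the constant in $H$, and the threshold $2\sqrt{2}\mapsto 2\sqrt{2\gamma}$) explicitly, which the paper leaves to the reader.
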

\begin{proof}
Proposition~\ref{p:reformulate}\ref{i:scaling} states
$$\proj{\Ct_{\gamma}}(\tilde{u}_0,\tilde{v}_0)=\sqrt{\gamma}\proj{\Ct_1}\left(
\frac{\tilde{u}_{0}}{\sqrt{\gamma}},\frac{\tilde{v}_{0}}{\sqrt{\gamma}}\right).$$
Apply \cref{t:hyperbola1} and \cref{p:2011d4.5}.
\end{proof}

\section{Projections onto bilinear constraint set $C_{\gamma}$ when $\gamma>0$}
\label{sec:positive:gam}
$P_{C_{\gamma}}$ can be found via $P_{\Ct_{\gamma}}$, which is the main result of this section.
\begin{theorem}
\label{t:withgamma}
Let $\gamma>0,~x_0,y_0 \in X$, and $C_{\gamma} =
      \menge{(x,y)\in X \times X}{\scal{x}{y} = \gamma}$.  
Then the following hold:
\begin{enumerate}
    \item \label{i:xy:notbothe}
    When $x_0 \neq  \pm y_0$, the projection is a singleton:
    \begin{align*}
\proj{C_{\gamma}}(x_0,y_0)
          =\left\{\begin{bmatrix}
    \frac{x_0-\lam y_0}{1-\lam^2} \\
    \addlinespace
    \frac{y_0-\lam x_0}{1-\lam^2}
    \end{bmatrix}\right\},
    \end{align*}
    in which $\lambda$ is the unique solution of $H(\lambda)=0$ in $\left]-1,1\right[$, where
    \begin{align*}
      H(\lambda) =
      \frac{(\lambda^2 + 1 ) p - 2\lambda q}{2(1-\lambda^2)^2} -\gamma,~
      p =2\scal{x_0}{y_0},~\text{ and } q =\norm{x_0}^2 + \norm{y_0}^2.
  \end{align*}
   \item\label{i:u:zero}
    When $x_0 =-y_0$,  the projection is a set:
    $$\proj{C_{\gamma}}(x_0,-x_0) =\left\{
  \begin{bmatrix}
  \frac{x_{0}}{2}+\frac{\tu}{\sqrt{2}}\\
  \addlinespace
  -\frac{x_0}{2}+\frac{\tu}{\sqrt{2}}
   \end{bmatrix}
  \bigg|\ \|\tu\|^2=2\gamma+ \frac{\|x_0\|^2}{2},~~ \tu \in X \right\}.$$

    \item\label{i:v:zero1}
    a) When $x_0 =y_0$ and $\|x_0\|\geq 2\sqrt{\gamma}$,
    the projection is a set:
    $$\proj{C_{\gamma}}(x_0,x_0) =  \left\{
   \begin{bmatrix}
   \frac{x_0}{2}-\frac{\tv}{\sqrt{2}}\\
   \addlinespace
   \frac{x_0}{2}+\frac{\tv}{\sqrt{2}}
   \end{bmatrix}
   \bigg|\ \|\tv\|^2=\frac{\|x_0\|^2}{2}-2\gamma,~~ \tv \in X \right\}.$$
  b) When $x_0 =y_0$ and $0<\|x_0\|< 2\sqrt{\gamma}$, the projection
  is a singleton:
    $$\proj{C_{\gamma}}(x_0,x_0) =  \left\{\sqrt{\gamma}\begin{bmatrix}\frac{x_{0}}{\|x_{0}\|}\\
    \addlinespace
    \frac{x_{0}}{\|x_{0}\|}
    \end{bmatrix}\right\}.
    $$
   \end{enumerate}
\end{theorem}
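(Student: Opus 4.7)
The plan is to exploit Proposition~\ref{p:reformulate}\ref{i:rotate}, which says $P_{C_{\gamma}}=A_{\pi/4}\,P_{\Ct_{\gamma}}\,A_{-\pi/4}$, so the entire theorem will be obtained by pre-computing $(\tu_0,\tv_0):=A_{-\pi/4}(x_0,y_0)$, invoking the corresponding case of \cref{t:201147}, and rotating the result back through $A_{\pi/4}$. A direct calculation from \cref{f:rotation} gives
\begin{equation*}
\tu_0=\tfrac{1}{\sqrt{2}}(x_0+y_0),\qquad \tv_0=\tfrac{1}{\sqrt{2}}(y_0-x_0),
\end{equation*}
so the three case distinctions in \cref{t:withgamma} correspond exactly to the three case distinctions in \cref{t:201147}: $x_0\neq\pm y_0$ means both $\tu_0,\tv_0$ are nonzero, $x_0=-y_0$ means $\tu_0=0$ (with $\tv_0=\sqrt{2}\,y_0$), and $x_0=y_0$ means $\tv_0=0$ (with $\tu_0=\sqrt{2}\,x_0$). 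In the last subcase one also checks $\|\tu_0\|=\sqrt{2}\|x_0\|$ compares with $2\sqrt{2\gamma}$ exactly as $\|x_0\|$ compares with $2\sqrt{\gamma}$.

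For \ref{i:xy:notbothe}, I apply \cref{t:201147}\ref{t:201147.a} to get the singleton $\bigl(\tfrac{\tu_0}{1+\lambda},\tfrac{\tv_0}{1-\lambda}\bigr)$. I then verify the parameters match: expanding $\|\tu_0\|^2=\tfrac12(\|x_0\|^2+2\scal{x_0}{y_0}+\|y_0\|^2)$ and $\|\tv_0\|^2=\tfrac12(\|x_0\|^2-2\scal{x_0}{y_0}+\|y_0\|^2)$ yields $p=\|\tu_0\|^2-\|\tv_0\|^2=2\scal{x_0}{y_0}$ and $q=\|\tu_0\|^2+\|\tv_0\|^2=\|x_0\|^2+\|y_0\|^2$, which is the $H$ stated in \ref{i:xy:notbothe}. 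Applying $A_{\pi/4}$ to $\bigl(\tfrac{\tu_0}{1+\lambda},\tfrac{\tv_0}{1-\lambda}\bigr)$ reduces, after combining over the common denominator $1-\lambda^2$ and using the definitions of $\tu_0,\tv_0$, to $\bigl(\tfrac{x_0-\lambda y_0}{1-\lambda^2},\tfrac{y_0-\lambda x_0}{1-\lambda^2}\bigr)$. This last algebraic simplification is the main obstacle: it requires writing $(1-\lambda)\tu_0\pm(1+\lambda)\tv_0$, substituting, and watching the cross terms in $x_0,y_0$ telescope; I expect the calculation to produce exactly $\sqrt{2}(x_0-\lambda y_0)$ and $\sqrt{2}(y_0-\lambda x_0)$, which after the $1/\sqrt{2}$ factor from $A_{\pi/4}$ gives the stated formula.

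For \ref{i:u:zero}, applying \cref{t:201147}\ref{t:201147.b} with $\tv_0=\sqrt{2}\,y_0=-\sqrt{2}\,x_0$ gives projections of the form $\bigl(\tu,\tfrac{\tv_0}{2}\bigr)=\bigl(\tu,-\tfrac{x_0}{\sqrt{2}}\bigr)$ with $\|\tu\|^2=2\gamma+\tfrac14\|\tv_0\|^2=2\gamma+\tfrac12\|x_0\|^2$. Rotating back through $A_{\pi/4}$ produces $\bigl(\tfrac{\tu}{\sqrt{2}}+\tfrac{x_0}{2},\,\tfrac{\tu}{\sqrt{2}}-\tfrac{x_0}{2}\bigr)$, which matches the asserted set (note the free parameter $\tu$ ranges over the sphere in $X$ of the stated radius). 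For \ref{i:v:zero1} I split along $\|x_0\|\gtreqless 2\sqrt{\gamma}$, which translates to $\|\tu_0\|\gtreqless 2\sqrt{2\gamma}$, and apply \ref{t:201147.c}(a) respectively (b) of \cref{t:201147}. In (a) the image point $(\tfrac{\tu_0}{2},\tv)=(\tfrac{x_0}{\sqrt{2}},\tv)$ rotates to $(\tfrac{x_0}{2}-\tfrac{\tv}{\sqrt{2}},\tfrac{x_0}{2}+\tfrac{\tv}{\sqrt{2}})$ with $\|\tv\|^2=\tfrac12\|x_0\|^2-2\gamma$, and in (b) the single image point $\sqrt{2\gamma}\tfrac{\tu_0}{\|\tu_0\|}=\sqrt{2\gamma}\tfrac{x_0}{\|x_0\|}$ rotates to $\sqrt{\gamma}\tfrac{x_0}{\|x_0\|}$ in each coordinate. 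The routine but slightly tedious part, besides the case (i) algebra, is keeping track of the factors $\sqrt{2}$ that appear from the rotation and the $\sqrt{\gamma}$-scaling implicit in passing through $\Ct_{\gamma}$.
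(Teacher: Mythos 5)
Your proposal is correct and follows essentially the same route as the paper: compute $(\tu_0,\tv_0)=A_{-\pi/4}(x_0,y_0)$, match the three cases of \cref{t:withgamma} to those of \cref{t:201147} via $x_0=\mp y_0\Leftrightarrow \tu_0=0$ or $\tv_0=0$, and rotate back with $A_{\pi/4}$, verifying $p=2\scal{x_0}{y_0}$ and $q=\|x_0\|^2+\|y_0\|^2$. The algebraic simplification you flag in case (i) does work out exactly as you predict, since $(1-\lambda)\tu_0\mp(1+\lambda)\tv_0=\sqrt{2}(x_0-\lambda y_0)$ and $\sqrt{2}(y_0-\lambda x_0)$ respectively, which is precisely the matrix computation carried out in the paper.
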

\begin{proof} Proposition~\ref{p:reformulate}\ref{i:rotate} states
\begin{equation}\label{e:pformula}
\proj{C_{\gamma}}(x_0,y_0) = A_{\frac{\pi}{4}}
  \proj{\Ct_{\gamma}}A_{-\frac{\pi}{4}}(x_{0},y_{0}).
    \end{equation}
It suffices to apply \cref{t:201147}. Indeed, with
\begin{equation}\label{e:point}
\begin{bmatrix}
\tilde{u}_{0}\\
\tilde{v}_{0}
\end{bmatrix} =A_{-\pi/4}\begin{bmatrix}
x_{0}\\
y_{0}
\end{bmatrix}=\begin{bmatrix}
\frac{x_{0}+y_{0}}{\sqrt{2}}\\
\addlinespace
\frac{-x_{0}+y_{0}}{\sqrt{2}}
\end{bmatrix},
\end{equation}
using \eqref{e:pformula} and \eqref{e:point} Theorem~\ref{t:201147} gives:
\bigskip

\ref{i:xy:notbothe}:  When $x_{0}\neq \pm y_{0}$, we have $\tu_{0}\neq 0, \tv_{0}\neq 0$.
By \cref{t:201147}\cref{t:201147.a}, we get
$\proj{\Ct_{\gamma}}(\tu_0,\tv_0)= [\frac{\tu_0}{(1+\lam)},\frac{\tv_0}{(1-\lam)}]^T$, so
\begin{align*}
\proj{C_{\gamma}}(x_0,y_0) &= \left[ {\begin{array}{cc}
   \frac{1}{\sqrt{2}}\Id & -\frac{1}{\sqrt{2}}\Id\\
   \addlinespace
   \frac{1}{\sqrt{2}}\Id & \frac{1}{\sqrt{2}}\Id \\
  \end{array} } \right]\left[ {\begin{array}{cc}
   \frac{1}{1+\lambda}\Id & 0\\
   \addlinespace
   0 & \frac{1}{1-\lambda}\Id \\
  \end{array} } \right]\left[ {\begin{array}{cc}
   \frac{1}{\sqrt{2}}\Id & \frac{1}{\sqrt{2}}\Id\\
   \addlinespace
   -\frac{1}{\sqrt{2}}\Id & \frac{1}{\sqrt{2}}\Id \\
  \end{array} } \right]\begin{bmatrix}
           x_0 \\
          y_0 \end{bmatrix}\\&=  \left[ {\begin{array}{cc}
   \frac{1}{1-\lambda^2}\Id & \frac{-\lambda}{1-\lambda^2}\Id\\
   \addlinespace
   \frac{-\lambda}{1-\lambda^2}\Id & \frac{1}{1-\lambda^2}\Id \\
  \end{array} } \right]\left[ {\begin{array}{cc}
   x_0\\
   y_0
  \end{array} } \right].
 \end{align*}
 Also,
 $\|p\|=\|\tu_{0}\|^2-\|\tv_{0}\|^2=2\scal{x_{0}}{y_{0}},$ and
 $\|q\|=\|\tu_{0}\|^2+\|\tv_{0}\|^2=\|x_{0}\|^2+\|y_{0}\|^2$.

  \ref{i:u:zero}:
  For $x_0=-y_0$, we have $\tu_{0}=0, \tv_{0}=-\sqrt{2}x_{0}$, so
  \begin{align*}
      \proj{C_{\gamma}}(x_0,-x_0)&=A_{\frac{\pi}{4}}\proj{\Ct_{\gamma}
      }(0,-\sqrt{2}x_0)\\
  &=\left\{A_{\frac{\pi}{4}}\begin{bmatrix}
  \tu\\
  \addlinespace
  \frac{-x_0}{\sqrt{2}}
  \end{bmatrix}\bigg|\|\tu\|^2=2\gamma+\frac{\|-\sqrt{2}x_0\|^2}{4}=2\gamma+\frac{\|x_0\|^2}{2}, \tu\in X\right\}.
  \end{align*}

  \ref{i:v:zero1}:
  When $x_0\neq -y_0$, and $x_0=y_0$, we have $x_0\neq 0, (x_0,y_0)=(x_0,x_0)$, so that
  $\tv_{0}=0$ and
  $\tu_0 = \frac{x_0+y_0}{\sqrt{2}}=\sqrt{2}x_0.$  Then
  $\|\tu_0\|\geq 2\sqrt{2\gamma} \Leftrightarrow \|x_0\|\geq 2\sqrt{\gamma}.$

  a) When $\|x_{0}\|\geq 2\sqrt{\gamma}$, we have
  \begin{align*}
    \proj{C_{\gamma}}(x_0,x_0)&=A_{\frac{\pi}{4}}\proj{\Ct_{\gamma}}(\sqrt{2}x_0,0)\\
    &=A_{\frac{\pi}{4}}\left\{\begin{bmatrix}
  \frac{x_0}{\sqrt{2}}\\
  \addlinespace
  \tv
  \end{bmatrix}\bigg|\ \|\tv\|^2=\frac{\|\sqrt{2}x_0\|^2}{4}-2\gamma=\frac{\|x_0\|^2}{2}-2\gamma,
  \tv\in X \right\}.
  \end{align*}

  b) When $0<\|x_0\|<2\sqrt{\gamma}$, we have $\|\tu_0\|<2\sqrt{2\gamma}$, so
  \begin{align*}
      \proj{C_{\gamma}}(x_0,x_0)&=A_{\frac{\pi}{4}}\proj{\Ct_{\gamma}}
      (\sqrt{2}x_0,0)=A_{\frac{\pi}{4}}\begin{bmatrix}
  \sqrt{2\gamma} \frac{\sqrt{2} x_0}{\|\sqrt{2}x_0\|}\\
  \addlinespace
  0
  \end{bmatrix}
  =A_{\frac{\pi}{4}}
  \begin{bmatrix}
  \sqrt{2\gamma} \frac{x_0}{\|x_0\|}\\
  \addlinespace
  0
  \end{bmatrix}.
  \end{align*}
\end{proof}

\section{Projections onto hyperbola $\Ct_{\gamma}$ and bilinear constraint set $C_{\gamma}$ when $\gamma<0$}
\label{sec:negative:gam}
Armed with the results in Sections~\ref{section:hyperbola} and ~\ref{sec:positive:gam},
we can study $P_{\Ct_{\gamma}}$ and
$P_{C_{\gamma}}$ when $\gamma<0$.
Define
$$T_{1}: X\times X\rightarrow X\times X: (x,y)\mapsto (y,x),\text{ and }$$
$$T_{2}:X\times X\rightarrow X\times X: (x,y)\mapsto (x,-y).$$
\begin{theorem}
\label{t:20114711}
Let $\gamma<0, \tu_0,\tv_0 \in \Xt$, and $\Ct_{\gamma} =
      \menge{(u,v)\in \Xt \times \Xt}{\norm{u}^2 -\norm{v}^2 = 2\gamma}$.
Then the following hold:
\begin{enumerate}
\item \label{t:20114711.a} When $\tu_0 \neq 0, \tv_0 \neq 0$, we have
$$\proj{\Ct_{\gamma}}(\tu_0,\tv_0)
  =
  \left\{\begin{bmatrix}
   \frac{\tu_0}{1-\lambda}\\
   \addlinespace
   \frac{\tv_0}{1+\lambda}
  \end{bmatrix}
  \right\}
  $$
 in which $\lambda$ is the unique root of $H(\lambda)=0$ in $\left]-1,1\right[$,
  where \begin{align*}
     H(\lambda) =
      \frac{(\lambda^2 + 1 ) p - 2\lambda q}{2(1-\lambda^2)^2} +\gamma,~
      p =\norm{\tv_0}^2 - \norm{\tu_0}^2,~\text{ and } q =\norm{\tu_0}^2 + \norm{\tv_0}^2.
      \end{align*}
      \item \label{t:20114711.c} When $\tv_0 = 0$, we have
    \begin{align}
    \proj{\Ct_{\gamma}}(\tu_0,0) =
    \left\{
    \begin{bmatrix}
    \frac{\tu_0}{2}\\
    \tv
    \end{bmatrix}
    \bigg|\ \|\tv\|^2 =  \frac{\|\tu_0\|^2}{4}-2\gamma\right\}.
    \end{align}
      \item \label{t:20114711.b}
    a) When $\tu_0 = 0$ and $ \|\tv_0\|\geq 2\sqrt{2(-\gamma)}$, we have
    \begin{align}
    \proj{\Ct_{\gamma}}(0,\tv_0) = \left\{
    \begin{bmatrix}
    \tu\\
    \frac{\tv_0}{2}
    \end{bmatrix}
   \bigg|\
    \|\tu\|^2 = \frac{\|\tv_0\|^2}{4}+2\gamma\right\}.
    \end{align}
    b) When $\tu_0 = 0$ and $0<\|\tv_0\|< 2\sqrt{2(-\gamma)}$, we have
    \begin{align}
    \proj{\Ct_{\gamma}}(0,\tv_0) = \left\{\begin{bmatrix}
    0\\
    \sqrt{2(-\gamma)}\frac{\tv_0}{\|\tv_0\|}\end{bmatrix}
    \right\}.
    \end{align}
\end{enumerate}
\end{theorem}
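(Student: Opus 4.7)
The plan is to reduce the $\gamma<0$ case to the already-established $\gamma>0$ case via the coordinate swap $T_1\colon(u,v)\mapsto(v,u)$. Since $T_1$ is an orthogonal involution, hence an isometry of $X\times X$, and $(u,v)\in\Ct_\gamma$ iff $\|v\|^2-\|u\|^2=2(-\gamma)$, we have $T_1(\Ct_\gamma)=\Ct_{-\gamma}$. Because isometries commute with projections, this yields
\begin{equation*}
\proj{\Ct_\gamma}(\tu_0,\tv_0)=T_1\bigl(\proj{\Ct_{-\gamma}}(T_1(\tu_0,\tv_0))\bigr)=T_1\bigl(\proj{\Ct_{-\gamma}}(\tv_0,\tu_0)\bigr),
\end{equation*}
and since $-\gamma>0$, the right-hand side is fully computable by \cref{t:201147}. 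The proof then amounts to applying that theorem in each case and swapping the two coordinates of its output.

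For the case $\tu_0\neq 0$, $\tv_0\neq 0$, invoke \cref{t:201147}\cref{t:201147.a} with input $(\tv_0,\tu_0)$ and parameter $-\gamma$. The relevant quantities become $\tilde p=\|\tv_0\|^2-\|\tu_0\|^2$ and $\tilde q=\|\tu_0\|^2+\|\tv_0\|^2$, and the determining scalar equation reads $\frac{(\lambda^2+1)\tilde p-2\lambda\tilde q}{2(1-\lambda^2)^2}-(-\gamma)=0$, which is exactly the stated $H(\lambda)=0$ with $p=\|\tv_0\|^2-\|\tu_0\|^2$ and $q=\|\tu_0\|^2+\|\tv_0\|^2$. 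Under $T_1$, the resulting point $\bigl[\tfrac{\tv_0}{1+\lambda},\tfrac{\tu_0}{1-\lambda}\bigr]^T$ transforms into $\bigl[\tfrac{\tu_0}{1-\lambda},\tfrac{\tv_0}{1+\lambda}\bigr]^T$, as claimed.

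For the case $\tv_0=0$, the swapped input $(\tv_0,\tu_0)=(0,\tu_0)$ has zero first coordinate, so \cref{t:201147}\cref{t:201147.b} applied with parameter $-\gamma$ gives $\proj{\Ct_{-\gamma}}(0,\tu_0)=\{(\tilde u,\tu_0/2):\|\tilde u\|^2=-2\gamma+\|\tu_0\|^2/4\}$; applying $T_1$ and relabelling $\tilde u$ as $\tv$ delivers the stated set. For the case $\tu_0=0$, the swapped input $(\tv_0,0)$ has zero second coordinate, so \cref{t:201147}\cref{t:201147.c} applied with parameter $-\gamma$ splits according to whether $\|\tv_0\|\geq 2\sqrt{2(-\gamma)}$ or $0<\|\tv_0\|<2\sqrt{2(-\gamma)}$; applying $T_1$ to each subcase produces the two formulas in the statement.

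There is no substantive technical obstacle; the argument is pure symmetry plus bookkeeping. The only point meriting care is consistently replacing $\gamma$ by $-\gamma$ in thresholds and constants (so e.g.\ $2\sqrt{2\gamma}$ in \cref{t:201147} becomes $2\sqrt{2(-\gamma)}$ here) and correctly interchanging the roles of $\tu_0$ and $\tv_0$ throughout.
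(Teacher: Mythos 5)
Your proposal is correct and follows essentially the same route as the paper: the paper also observes that the swap $T_1\colon(u,v)\mapsto(v,u)$ turns the $\gamma<0$ problem into the $-\gamma>0$ problem, giving $P_{\Ct_{\gamma}}=T_{1}P_{\Ct_{-\gamma}}T_{1}$, and then invokes \cref{t:201147}. Your write-up merely spells out the case-by-case bookkeeping that the paper leaves implicit.
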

\begin{proof}
Since
$$\minimize ~ \|\tu-\tu_{0}\|^2+\|\tv-\tv_{0}\|^2 \quad \text{subject to}\quad \norm{\tu}^2 -\norm{\tv}^2= 2\gamma$$
is equivalent to
$$\minimize ~ \|\tv-\tv_{0}\|^2+
\|\tu-\tu_{0}\|^2 \quad \text{subject to}\quad \norm{\tv}^2 -\norm{\tu}^2= 2(-\gamma),$$
we have
$P_{\Ct_{\gamma}}=T_{1}P_{\Ct_{-\gamma}}T_{1}.$ It suffices to apply
\cref{t:201147}.
\end{proof}

\begin{theorem}
\label{t:negativegamma}
Let $\gamma<0,~x_0,y_0 \in X$, and $C_{\gamma} =
      \menge{(x,y)\in X \times X}{\scal{x}{y} = \gamma}$.  
Then the following hold:
\begin{enumerate}
    \item
    When $x_0 \neq  \pm y_0$, the projection is a singleton:
    \begin{align*}
\proj{C_{\gamma}}(x_0,y_0)
          =
          \left\{\begin{bmatrix}
    \frac{x_0+\lam y_0}{1-\lam^2} \\
    \addlinespace
    \frac{y_0+\lam x_0}{1-\lam^2}
    \end{bmatrix}
    \right\}
     \end{align*}
    in which  $\lambda$ is the unique solution of $H(\lambda)=0$ in
    $\left]-1,1\right[$, where
    \begin{align*}
      H(\lambda) =
      \frac{(\lambda^2 + 1 ) p - 2\lambda q}{2(1-\lambda^2)^2} +\gamma,~
      p =-2\scal{x_0}{y_0},~\text{ and } q =\norm{x_0}^2 + \norm{y_0}^2.
  \end{align*}
   \item
    When $x_0 =y_0$,  the projection is a set:
    $$\proj{C_{\gamma}}(x_0,x_0) =\left\{
    \begin{bmatrix}
    \frac{x_{0}}{2}+\frac{\tu}{\sqrt{2}}\\
    \addlinespace
    \frac{x_{0}}{2}-\frac{\tu}{\sqrt{2}}
    \end{bmatrix}
    \bigg|\ \|\tu\|^2=-2\gamma+ \frac{\|x_0\|^2}{2},~~ \tu \in X\right\}.$$

    \item
    a) When $x_0 =-y_0$ and $\|x_0\|\geq 2\sqrt{-\gamma}$,
    the projection is a set:
    $$\proj{C_{\gamma}}(x_0,-x_0) =  \left\{
    \begin{bmatrix}
    \frac{x_{0}}{2}-\frac{\tv}{\sqrt{2}}\\
    \addlinespace
    \frac{-x_{0}}{2}-\frac{\tv}{\sqrt{2}}
    \end{bmatrix}
    \bigg|\ \|\tv\|^2=\frac{\|x_0\|^2}{2}+2\gamma,
    ~~ \tv \in X\right\}.$$
  b) When $x_0 =-y_0$ and $0<\|x_0\|< 2\sqrt{-\gamma}$, the projection
  is a singleton:
    $$\proj{C_1}(x_0,-x_0) =  \left\{
    \sqrt{-\gamma}
    \begin{bmatrix}
    \frac{x_{0}}{\|x_{0}\|}\\
    \addlinespace
    \frac{-x_{0}}{\|x_{0}\|}
    \end{bmatrix}
    \right\}.$$
   \end{enumerate}
\end{theorem}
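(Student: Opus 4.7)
The strategy mirrors the proof of Theorem~\ref{t:withgamma}: combine the rotation identity $P_{C_{\gamma}} = A_{\pi/4}\circ P_{\Ct_{\gamma}}\circ A_{-\pi/4}$ from Proposition~\ref{p:reformulate}\ref{i:rotate} with Theorem~\ref{t:20114711}, which already supplies $P_{\Ct_{\gamma}}$ for $\gamma<0$. First I would compute the rotated initial point
$$
(\tilde{u}_0,\tilde{v}_0) \;=\; A_{-\pi/4}(x_0,y_0) \;=\; \left(\frac{x_0+y_0}{\sqrt{2}},\,\frac{-x_0+y_0}{\sqrt{2}}\right),
$$
and observe that the three case distinctions line up exactly: $x_0\neq\pm y_0$ is equivalent to $\tilde{u}_0\neq 0$ and $\tilde{v}_0\neq 0$; $x_0=y_0$ corresponds to $\tilde{v}_0=0$ with $\tilde{u}_0=\sqrt{2}\,x_0$; and $x_0=-y_0$ corresponds to $\tilde{u}_0=0$ with $\tilde{v}_0=-\sqrt{2}\,x_0$.

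For case (i), Theorem~\ref{t:20114711}\ref{t:20114711.a} yields $P_{\Ct_{\gamma}}(\tilde{u}_0,\tilde{v}_0)=\{(\tilde{u}_0/(1-\lambda),\,\tilde{v}_0/(1+\lambda))\}$. The key computation is the conjugated matrix
$$
A_{\pi/4}\cdot\begin{bmatrix}\tfrac{1}{1-\lambda}\Id & 0\\ 0 & \tfrac{1}{1+\lambda}\Id\end{bmatrix}\cdot A_{-\pi/4} \;=\; \frac{1}{1-\lambda^2}\begin{bmatrix}\Id & \lambda\,\Id\\ \lambda\,\Id & \Id\end{bmatrix},
$$
which applied to $(x_0,y_0)$ produces exactly the claimed singleton. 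The parameters $p$ and $q$ in the original variables then follow from $p=\|\tilde{v}_0\|^2-\|\tilde{u}_0\|^2=-2\langle x_0,y_0\rangle$ and $q=\|\tilde{u}_0\|^2+\|\tilde{v}_0\|^2=\|x_0\|^2+\|y_0\|^2$.

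For cases (ii) and (iii) I would feed the rotated data into Theorem~\ref{t:20114711}\ref{t:20114711.c} and \ref{t:20114711.b} respectively, then apply $A_{\pi/4}$ elementwise to each resulting set. A mild reparametrization tidies the output: in case (ii), substituting $\tilde{v}\mapsto -\tilde{u}$ converts the direct computation $\{(x_0/2-\tilde{v}/\sqrt{2},\,x_0/2+\tilde{v}/\sqrt{2}):\|\tilde{v}\|^2=\|x_0\|^2/2-2\gamma\}$ into the stated form, and analogously in case (iii)(a). In (iii)(b) one uses the simplification $\sqrt{-2\gamma}/\sqrt{2}=\sqrt{-\gamma}$ together with the sign carried from $\tilde{v}_0=-\sqrt{2}\,x_0$. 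The main obstacle is purely bookkeeping: because Theorem~\ref{t:20114711} was derived from Theorem~\ref{t:201147} by the coordinate swap $T_1$, the roles of $x_0=y_0$ and $x_0=-y_0$ (and of the $1\pm\lambda$ denominators in case (i)) are interchanged compared to Theorem~\ref{t:withgamma}, so each piece must be matched to its correct branch with care.
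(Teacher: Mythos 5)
Your proof is correct, but it takes a different reduction than the paper. The paper's own proof is a two-line symmetry argument entirely in the $(x,y)$ coordinates: since replacing $y$ by $-y$ turns the constraint $\scal{x}{y}=\gamma$ into $\scal{x}{-y}=-\gamma$ while preserving distances, one has $P_{C_{\gamma}}=T_{2}P_{C_{-\gamma}}T_{2}$ with $T_{2}(x,y)=(x,-y)$, and then Theorem~\ref{t:withgamma} (the $\gamma>0$ case) is applied directly; all rotation bookkeeping is inherited from that earlier theorem. You instead rerun the rotation machinery of Proposition~\ref{p:reformulate}\ref{i:rotate} and feed the rotated point into Theorem~\ref{t:20114711}, the negative-$\gamma$ hyperbola result. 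Both routes are valid and rest on the same underlying computation (Theorem~\ref{t:201147}, hence Theorems~\ref{t:hyperbola1} and~\ref{p:2011d4.5}); the paper's is shorter, while yours is more self-contained in that it exhibits explicitly how each branch of the hyperbola theorem produces each case of the statement, and your matrix conjugation $A_{\pi/4}\,\mathrm{diag}(\tfrac{1}{1-\lambda}\Id,\tfrac{1}{1+\lambda}\Id)\,A_{-\pi/4}=\tfrac{1}{1-\lambda^{2}}\bigl[\begin{smallmatrix}\Id & \lambda\Id\\ \lambda\Id & \Id\end{smallmatrix}\bigr]$, the identifications of $p$, $q$, and the sign-flip reparametrizations in cases (ii) and (iii) all check out. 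One small point you should make explicit: Proposition~\ref{p:reformulate}\ref{i:rotate} is stated in a section that assumes $\gamma>0$, so you need to observe that its proof (a pure change of variables under which $\scal{x}{y}=\gamma$ becomes $\norm{\tu}^{2}-\norm{\tv}^{2}=2\gamma$) is independent of the sign of $\gamma$; this is true, so it is a presentational gap rather than a mathematical one.
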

\begin{proof} Since
$$\minimize ~ \|x-x_{0}\|^2+\|y-y_{0}\|^2 \quad \text{subject to}\quad \scal{x}{y}= \gamma$$
is equivalent to
$$\minimize ~ \|x-x_{0}\|^2+\|z-(-y_{0})\|^2 \quad \text{subject to}\quad \scal{x}{z}= -\gamma,$$
we have
$P_{C_{\gamma}}=T_{2}P_{C_{-\gamma}}T_{2}$. It remains to apply Theorem~\ref{t:withgamma}.
\end{proof}

\section*{Acknowledgments}
HHB and XW were supported by NSERC Discovery grants. MKL was supported by SERB-UBC fellowship and HHB and XW's NSERC Discovery grants. This work was presented at the SIAM Conference on Optimization in July 20--23, 2021 by MKL.

\end{document}